\documentclass[11pt]{article}

\usepackage[english]{babel}
\usepackage{geometry}
\usepackage{xcolor}
\usepackage{amsmath}
\usepackage{comment}
\usepackage{amssymb}
\usepackage{mathtools}
\usepackage[utf8]{inputenc}
\usepackage{color}
\usepackage{amsthm}

\usepackage{tikz}
\usetikzlibrary{graphs,arrows.meta}
\usepackage[shortlabels]{enumitem}

\usepackage{graphicx}
\usepackage{caption}
\usepackage{subcaption}
\usepackage{hyperref}
\usepackage{xcolor}
\usepackage{xr}
\usepackage[capitalise,nameinlink]{cleveref}
\usepackage{float}
\usepackage{soul} %  \st{} w

\usepackage{ytableau}

\usepackage{multirow} 

\newcommand{\vanish}[1]{}

\makeatletter

\providecommand*{\bigcupdot}{%
  \mathop{%
    \vphantom{\bigcup}%
    \mathpalette\@bigcupdot{}%
  }%
}
\newcommand*{\@bigcupdot}[2]{%
  \ooalign{%
    $\m@th#1\bigcup$\cr
    \sbox0{$#1\bigcup$}%
    \dimen@=\ht0 %
    \advance\dimen@ by -\dp0 %
    \sbox0{\scalebox{2}{$\m@th#1\cdot$}}%
    \advance\dimen@ by -\ht0 %
    \dimen@=.5\dimen@
    \hidewidth\raise\dimen@\box0\hidewidth
  }%
}
\makeatother

\newcommand{\bsy}{\boldsymbol}

\theoremstyle{definition}
\newtheorem{definition}{Definition}

\newtheorem{theorem}{Theorem}
\newtheorem{corollary}[theorem]{Corollary}

\newtheorem{example}{Example}

\newcommand{\vt}{vacillating tableau}
\newcommand{\vtx}{vacillating tableaux}
\makeatletter

\title{New Approaches to Identities for Vacillating Tableaux } 
\author{Derek Wu\footnote{dwu120@tamu.edu} and Catherine H. Yan\footnote{huafei-yan@tamu.edu. Yan was supported  by the Simons Collaboration Grant for Mathematics 704276. } 
\\
Department of Mathematics, Texas A\&M University, \\
College Station, TX 77843}

\date{} % May 2025 

\begin{document}

\maketitle 

\begin{abstract}

A fundamental identity for the number of vacillating tableaux was originally obtained through the representation theory of partition algebras. We extend this identity to arbitrary differential posets and show that it, together with several analogous identities, follows directly from the structural properties of differential posets. Specializing to Young's lattice and its Cartesian powers, we further obtain new bijective proofs via a simple deletion--insertion process.
\end{abstract}

% MSC numbers: 05A17, 05A19, 05E10 
% Key words:  vacillating tableau, differential poset 
\section{Introduction}

One of the fundamental identities in the representation theory of partition algebras, due to Halverson and Lewandowski \cite{HL05}, is 
\begin{equation} \label{eq: Identity1} 
    n^k=\sum_{\lambda\vdash n} f^\lambda m_{k}(\lambda),  \qquad \text{for } n \geq 1, 
\end{equation}
 where the sum is over integer partitions $\lambda$ of $n$, $f^\lambda$ is the number of standard Young tableaux (SYT) of shape $\lambda$, and $m_k(\lambda)$ is the number of vacillating tableaux of shape $\lambda \vdash n$ and length $2k$. Here a \emph{\vt\  of shape $\lambda$ and length $2k$}  is a sequence of partitions 
$\lambda_0=(n) \supseteq \lambda_{\frac12} \subseteq \lambda_1 
\supseteq \lambda_{\frac32} \subseteq \lambda_2 
\supseteq 
\cdots 
\subseteq \lambda_{k-1} 
\supseteq \lambda_{k-\frac{1}{2}} \subseteq \lambda_k  =\lambda 
$ such that the sizes of consecutive partitions differ by exactly one. 
Following the English notation, we represent an integer partition $\lambda=(a_1, a_2, \dots, a_k)_>$ by the Young diagram that contains $a_j$ squares in the $j$-th row, which are aligned in the upper-left corner. Below is an example of a vacillating tableau of shape $\lambda=(3,2,1) \vdash 6$ and length $6$. 
\[ 
\ytableausetup{boxsize=7pt}  
\ydiagram{6}, \  \ydiagram{5},\ 
\ydiagram{5,1},\, \ydiagram{4,1}, \ \ydiagram{4,2},\ \ydiagram{3,2},\ \ydiagram{3,2,1}.
\]

In literature, there are two  definitions of a \vt. 
The  definition  above was introduced in \cite{HL05}, while the term \emph{\vt} was adopted from \cite{CDDSY}, where    the combinatorial structures of crossings and nestings in matchings and set partitions were characterized. In \cite{CDDSY} Chen et al.~defined  a \vt\ as a  sequence of integer partitions that starts from the empty shape $\emptyset$, where each $\lambda_i$ is obtained from $\lambda_{i-\frac12}$ by adding a square, deleting a square, or doing nothing, depending on whether $i$ is an integer or a half integer.  When $n \geq 2k$, the two notions are equivalent.  

In this paper we use the definition from \cite{HL05} as given in the first paragraph, while the notion defined in \cite{CDDSY} will be referred to as a \emph{simplified \vt}. 
Another way to understand \vtx\ is to view  them as  walks in Young's lattice $\mathbb{Y}$, the poset of  all integer partitions ordered by inclusion.  
 Explicitly,   a \vt\  is a walk in the Hasse diagram of $\mathbb{Y}$ starting  from $(n)$ and using alternating down and up steps.

 %In differential posets, there are down-up alternating walks. 

Identity \eqref{eq: Identity1} presents a combinatorial analogue of the Schur-Weyl duality between the symmetric group algebra and the partition algebra. 
In \cite{HL05}, 
Halverson and Lewandowski constructed an elegant combinatorial proof.  
Using a deletion-insertion algorithm based on jeu de taquin (\texttt{jdt}) and 
the Robinson-Schensted-Knuth (RSK) insertion algorithm, they  associate to each integer sequence $\bsy{i} \in [n]^k$, where $[n]\coloneqq\{ 1, 2, \ldots, n\}$,  a pair of tableaux of the same shape, one being a standard Young tableau and the other a vacillating tableau. %The explicit description of the bijection $DI_n^k$ is recalled in Section 3. 

 Let $\mu$ be any fixed partition of $n$. 
Kratthenthaler \cite{Kra23} generalized 
Identity \eqref{eq: Identity1}  to the following form, 
%using the growth diagrams developed by Fomin  (\cite{Fomin95, Fomin95-2, Fomin88}):
\begin{eqnarray} \label{Eq:Id2} 
    f^\mu n^k = \sum_{\lambda \vdash n} f^\lambda \  m_k(\mu \rightarrow \lambda),
\end{eqnarray}
where $m_k(\mu \rightarrow \lambda)$ is the number of  vacillating tableaux of length $2k$ from $\mu$ to $\lambda$, that is, sequences of partitions $\lambda_0=\mu \supseteq \lambda_{\frac12} \subseteq \lambda_1 
\supseteq \lambda_{\frac32} \subseteq \lambda_2 
\supseteq 
\cdots 
\subseteq \lambda_{k-1} 
\supseteq \lambda_{k-\frac{1}{2}} \subseteq \lambda_k  =\lambda 
$ such that consecutive partitions differ by exactly one square. 
Clearly $m_k(\lambda) = m_k((n) \rightarrow \lambda)$. 
The proofs in \cite{Kra23} are presented  in terms of Fomin's growth diagrams   (\cite{Fomin95, Fomin95-2, Fomin88}).

In this paper,  we  consider  Identities \eqref{eq: Identity1}  and \eqref{Eq:Id2} in the content of differential posets,  which  contains Young's lattice as a prototypical example.
More precisely, we study  certain alternating walks in the Hasse diagram of a  differential poset and 
prove that Identity \eqref{Eq:Id2}, together with several of its analogs, follows 
from the structural property of differential posets by elementary combinatorial argument. 
We then present a new deletion--insertion algorithm that is simpler than the construction of Halverson and Lewandowski. Finally, we extend this new algorithm to obtain bijective proofs of these variant identities in Young's lattice $\mathbb{Y}$ and its Cartesian powers $\mathbb{Y}^r$.

\section{Walks in Differential Posets} 

%Define notations in differential posets. 

Differential posets were introduced by Stanley \cite{Stanley88} in 1988. They are a class of partially ordered sets with remarkable combinatorial and algebraic properties.

\begin{definition}[Stanley \cite{Stanley88}]
    Let $r$ be a positive integer. A poset $P$ is called \emph{$r$-differential} if it satisfies the following three conditions. 
    \begin{itemize}
        \item[(D1)] $P$ is locally finite, graded, and has a $\hat 0$ element. 
        \item[(D2)] If $x \neq y$ in $P$ and there are exactly $k$ elements of $P$ which are covered by both $x$ and $y$, then there are exactly $k$ elements of $P$ which cover both $x$ and $y$. 
        \item[(D3)] If $x\in P$ and $x$ covers exactly $k$ elements of $P$, then $x$ is covered by exactly $k+r$ elements of $P$. 
    \end{itemize}
\end{definition} 

A typical example of a differential poset is 
Young's lattice $\mathbb{Y}$, which is $1$-differential. In general, $\mathbb{Y}^r$ is $r$-differential. There are infinitely many  irreducible  $r$-differential posets; see the discussion in \cite[Section 6]{Stanley88}. Among them, another one with significant combinatorial properties is 
the Fibonacci differential poset $Z(r)$ described in \cite[Section 5.2]{Stanley88}. 
% In this paper, we are only concerned with the case $r=1$, and we simply refer to a $1$-differential poset as \emph{differential poset}. 

Let $P$ be a $r$-differential poset with the rank function $\rho$ and  $P_n=\{x  \in P: \ \rho(x)=n\}$ be the set of elements of rank $n$. 
Given $x \in P_n$, define 
\[
C^-(x)=\{ y \in P:\ x \text{ covers } y\}, \qquad 
C^+(x)=\{y \in P:\ y \text{ covers } x\}.
\]
Note that both $C^-(x)$ and $C^+(x)$ are finite. 

Let $K$ be a field and $KP$ be the $K$-vector space with basis $P$, while $\hat K P$ be the $K$-vector space of arbitrary linear combination of elements of $P$. We can define two continuous linear
operators, $U$ and $D$, from $\hat K P \rightarrow \hat K P$, by 
\[
U x = \sum_{y \in C^+(x)} y, \qquad D x = \sum_{y \in C^-(x)} y,
\]
for all $x \in P$. 
We say that $U$ is the \emph{up} operator and $D$ is the \emph{down} operator. 
In addition, let $I: \hat KP\rightarrow \hat KP$ be the identity transformation. 
A fundamental result proved in \cite[Theorem 2.2]{Stanley88} asserts that 
if $P$ is $r$-differential, then  
\[
DU-UD=rI,
\]
where linear operators act right-to-left, e.g., $DU x = D(Ux)$. 

There is a symmetric bilinear form $\langle \  ,\ \rangle:  KP \times KP \rightarrow K$, defined by  
\[
\langle x, y\rangle =\delta_{x,y} \qquad \text{for 
$x,y \in P$}.
\]
The form, as introduced in \cite{Stanley88}, was defined on $\hat K P \times KP$ and continuous in the first coordinate. Here we only apply it to finite linear combinations of elements in $P$, hence 
$KP \times KP$. It follows that 
\[
\langle \sum a_x x , \sum b_x x \rangle = \sum a_x b_x, 
\]
when the variable $x$ ranges over a finite subset of $P$.  The operators $U$ and $D$ are adjoint under this form, i.e., 
\[
\langle Dx,y \rangle  = \langle  x, U y \rangle, \qquad 
\langle Ux, y \rangle = \langle x, Dy \rangle.  
\]

The problem of counting walks in the Hasse diagram of a differential poset was initially studied in \cite{Stanley88}. In particular, various generating functions and formulas  were given for walks following a special pattern $w \in \{U, D\}^*$. 
The main goal of the present work is to use these ideas to extend Identities \eqref{eq: Identity1} and \eqref{Eq:Id2} to differential posets. 
The key ingredient of our approach is the following well-known  identity that expands $(UD)^k$ as a linear combination of $U^iD^j$. 
\begin{theorem} \label{thm:UD}
Let $P$ be a $r$-differential poset. Then for any $k \geq 0$,  we have 
\begin{eqnarray} \label{eq:(UD)^n}
(UD)^k =\sum_l r^{k-l} S(k,l) U^l D^l,  
\end{eqnarray}
where $S(k,l)$ is the  Stirling number of the second kind that counts the number of set partitions of $[k]$ into $l$ blocks. 
\end{theorem}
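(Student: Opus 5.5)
We prove the identity by induction on $k$, using only the commutation relation $DU-UD=rI$ from \cite[Theorem 2.2]{Stanley88}. The base case $k=0$ is immediate: both sides equal $I$, since $S(0,0)=1$ and $S(0,l)=0$ for $l>0$. (If one prefers to start at $k=1$, then $S(1,1)=1$ and both sides equal $UD$.)

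The first step is an auxiliary commutation formula:
\[
DU^l = U^l D + l r\, U^{l-1} \qquad (l\ge 0).
\]
This follows by induction on $l$. Write $DU^{l+1}=(DU)U^l=(UD+rI)U^l=U(DU^l)+rU^l$, and then apply the induction hypothesis to $DU^l$.

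For the inductive step, assume the formula holds for $(UD)^k$ and multiply on the left by $UD$. Then
\[
(UD)^{k+1} = \sum_l r^{k-l} S(k,l)\, U D U^l D^l .
\]
By the auxiliary formula, $UDU^lD^l = U(U^lD + lrU^{l-1})D^l = U^{l+1}D^{l+1} + lr\,U^lD^l$. Collecting the coefficient of $U^mD^m$ gives
\[
r^{k+1-m}\bigl(S(k,m-1) + m\,S(k,m)\bigr) = r^{k+1-m} S(k+1,m),
\]
which is the standard Stirling recurrence. The recurrence has the usual combinatorial reading: the element $k+1$ is either a singleton block or is inserted into one of the $m$ existing blocks.

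Multiplying on the left rather than the right is the convenient choice, because it lets $D$ pass through $U^l$ directly. The main point needing care is the bookkeeping of the powers of $r$. In the first term, $l$ becomes $m-1$, so the exponent $k-(m-1)=k+1-m$ is already correct. In the second term, the extra factor $r$ raises $r^{k-m}$ to $r^{k+1-m}$. Both terms therefore carry the same power $r^{k+1-m}$, and the recurrence closes the induction.

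All manipulations are identities of linear operators on $\hat KP$. They are legitimate because $U$ and $D$ are continuous and only finite sums in $l$ appear.
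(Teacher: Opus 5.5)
Your induction is correct. The lemma $DU^l=U^lD+lr\,U^{l-1}$ holds, the expansion $UDU^lD^l=U^{l+1}D^{l+1}+lr\,U^lD^l$ is right, both contributions to $U^mD^m$ carry $r^{k+1-m}$, and the recurrence $S(k+1,m)=S(k,m-1)+mS(k,m)$ closes the argument.

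The paper takes a different, non-inductive route; it only mentions in passing that induction would also work. It writes $(UD)^k=U_1D_1U_2D_2\cdots U_kD_k$ and pushes $D_{k-1},\dots,D_1$ to the right past the remaining $U_j$ with $j>i$. Each time $D_i$ cancels against $U_j$, it contributes a factor $r$ and is recorded as an arc $(i,j)$. The possible arc sets are exactly the standard representations of set partitions of $[k]$. A partition with $l$ blocks leaves $l$ uncancelled copies of each operator, so the coefficient $r^{k-l}S(k,l)$ comes out in one stroke.

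Your version is easier to make fully rigorous, since it needs only the Stirling recurrence. The paper's argument instead requires checking that the cancellation patterns biject with set partitions. In return, the paper's version shows the set-partition structure term by term, and the same picture adapts directly to $(DU)^k$ with $S(k+1,l+1)$.

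The two are closer than they look: your inductive step is the paper's argument done one factor at a time. The new $D$ either passes all $l$ surviving $U$'s, which makes the new element a singleton block, or cancels with one of them. The surviving $U_j$ are exactly those indexed by block minima, so a cancellation joins the new element to one of the $l$ existing blocks. This is the combinatorial reading of the recurrence you cite.
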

This identity  appears,  for example, in \cite[Proposition 4.9]{Stanley88}; earlier references trace back to Commet’s book  \cite[Exercise 2, p.220]{Commet74}.
It is also straightforward to prove by induction. Below we provide a simple combinatorial argument, which shows explicitly the connection to set partitions. 

For a partition $\pi$ of the set $[n]$, we write Arc$(\pi)$  for the set of pairs
of integers $(i,j)$ such that $i$ and $j$ are consecutive elements in the same block of $\pi$. One often represents set partitions by drawing a graph whose vertex set is $[n]$ and whose edge set is Arc($\pi$). Such a graph is called the standard representation of $\pi$. For example,   the standard representation  of $\pi=\{\{1, 3, 4,7 \}, \{2,5\}, \{6\}\}$  is 

\begin{center} 
\begin{tikzpicture}[scale=1,
    every node/.style={circle, draw, minimum size=8mm, inner sep=0pt}
]

% Vertices as small black circles with labels underneath
\foreach \x/\label in {
    0/1,
    1.5/2,
    3/3,
    4.5/4,
    6/5,
    7.5/6,
    9/7
}{
    \fill (\x,0) circle (2pt);
    \node[below=4pt, draw=none] at (\x,0) {\label};
}

% Upper-half edges
\draw (0,0) to[bend left=30] (3,0);     % (1,3)
\draw (3,0) to[bend left=15] (4.5,0);   % (3,4)
\draw (4.5,0) to[bend left=35] (9,0);   % (4,7)
\draw (1.5,0) to[bend left=35] (6,0);   % (2,5)

\end{tikzpicture}
\end{center}

\begin{proof}[Proof of Theorem 1.]
 The case with $k=0$ is trivial. Assume $k \geq 1$. 
    Write $(UD)^k$ as $U_1D_1U_2D_2 \cdots U_kD_k$, where $U_i=U$ and $D_i=D$ for all $i$. We push all the $D_i$-operators, in the order 
    $i=k-1, \dots, 1$,  to the
    right of all $U$-operators using the relation $DU=UD+rI$. 
    The relation implies that, while we try to push the operators $D_i$ to the right of  $U_j$ with $i <j$,  either
    we get $U_jD_i$, or this pair of operators cancel  each other and we get a constant factor $r$. 

    The coefficient of $U^lD^l$ at the end of this process is the number of ways we can push exactly $l$ many $D$-operators to the right of  all $U$'s. Hence $k-l$ of the $D$-operators are canceled with some $U$-operators.   Each such a process defines a unique set partition $\pi$ on $[k]$ as follows: $(i,j)$ is in Arc($\pi$)  if and only if $D_i$ cancels with $U_j$.  This gives the coefficient $r^{k-l} S(k,l)$.     
\end{proof}

A similar argument leads to the following expression, which will be used in our calculations later. 
\begin{corollary} \label{cor:DU} 
 In a $r$-differential poset, 
    we have 
    \[
    D^n U^l = \sum_i r^i \binom{n}{i} \binom{l}{i} i!  U^{l-i}D^{n-i},
    \]
    where $i \leq \min(n, l)$. 
\end{corollary}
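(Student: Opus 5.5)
The plan is to mirror the combinatorial proof of Theorem~\ref{thm:UD}, this time in normal-ordering form. Write $D^nU^l = D_1D_2\cdots D_n\,U_1U_2\cdots U_l$ with $D_a=D$ and $U_b=U$ for all indices. We move every $D$-operator to the right of every $U$-operator using the relation $DU=UD+rI$. Each time some $D_a$ meets some $U_b$, we choose one of two outcomes. Either the two operators swap, giving $U_bD_a$, or they annihilate each other, contributing a factor $r$. Expanding the product fully expresses $D^nU^l$ as a sum over all possible histories of this process.

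The key step is to show that a history is encoded by a partial matching between $\{D_1,\dots,D_n\}$ and $\{U_1,\dots,U_l\}$. A matched pair $(D_a,U_b)$ records that $D_a$ annihilated with $U_b$. To set up this encoding, we push the $D$'s in the order $a=n,n-1,\dots,1$, as in the proof of Theorem~\ref{thm:UD}. At the moment $D_a$ is pushed, the operators to its right are the $U$'s not yet consumed, followed by the $D$'s already moved past them. The operator $D_a$ travels through these remaining $U$'s one at a time. At each one it either commutes or cancels, and once it cancels it is gone. So $D_a$ either survives to the far right, or it is matched with exactly one $U_b$ that no $D_{a'}$ with $a'>a$ has already used. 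Because any surviving operators must be pushed through the same $U$'s, the process yields precisely the injective partial maps from a subset of the $D$'s into the $U$'s. Conversely, every partial matching arises from exactly one history.

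Counting then finishes the proof. A matching with $i$ pairs carries the weight $r^i$ and leaves the monomial $U^{l-i}D^{n-i}$. The number of such matchings is $\binom{n}{i}\binom{l}{i}i!$: choose which $i$ of the $D$'s cancel, which $i$ of the $U$'s cancel, and then a bijection between the two chosen sets. Summing over $0\le i\le\min(n,l)$ gives the stated formula.

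The main obstacle is to check carefully that histories correspond bijectively to partial matchings. In particular, we must confirm that no order constraint forbids some matchings; for example, a later $D$ might seem unable to reach a $U$ lying to the right of an earlier cancellation. If this bookkeeping becomes awkward, I would fall back on induction. First, by induction on $l$, prove $DU^l=U^lD+rlU^{l-1}$. Then induct on $n$, multiplying on the left by $D$ and applying this one-line identity term by term. The coefficients then obey the recursion $c_{n+1,i}=c_{n,i}+r(l-i+1)c_{n,i-1}$, and one checks that $r^i\binom{n}{i}\binom{l}{i}i!$ satisfies it via Pascal's rule. This route is routine, so the combinatorial argument is only the preferred presentation.
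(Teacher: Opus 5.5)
Your proposal is correct and is essentially the ``similar argument'' the paper invokes by pointing back to the proof of Theorem~\ref{thm:UD}: push the $D$'s to the right using $DU=UD+rI$, and record each cancellation, here as a partial matching between $\{D_1,\dots,D_n\}$ and $\{U_1,\dots,U_l\}$, of which there are $\binom{n}{i}\binom{l}{i}i!$ with $i$ pairs. The ordering worry you raise does not arise, since when $D_a$ is pushed every unconsumed $U$ still lies to its right, so each partial matching comes from exactly one history; your inductive fallback via $c_{n+1,i}=c_{n,i}+r(l-i+1)c_{n,i-1}$ and Pascal's rule is also correct.
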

 
\Cref{thm:UD} and \Cref{cor:DU} have appeared in \cite{Fomin95-2} for dual graded graphs, which  generalize  differential posets. The 
results in \cite{Fomin95-2} use  the notion of \emph{$r$-colored diagonal set}, another way to encode the cancellation of the $D$ and $U$ operators. The identities below therefore follow from the general theory developed in \cite{Fomin95-2}. Here, however, we show that they  can also be proved  easily  by elementary combinatorial arguments.

\subsection{Down-up alternating walks} 
In this paper,% we are only concerned with the case $r=1$, and we simply refer to a $1$-differential poset as \emph{differential poset}. 
we are interested in walks in an $r$-differential poset $P$. 
Given $x \in P_n$ and a word $w \in \{U, D\}^*$, we have $wx=\sum_{y} c_y y$, where $c_y$ is the number of walks from $x$ to $y$ in the Hasse diagram of $P$ of length $|w|$, following the pattern of $w$, in which $U$ means an up  step and $D$ means a down step with respect to the rank of $P$, and one should read the letters in the word $w$ from right to left.  Call such walks  \emph{$w$-walks}. 
In particular, we say that a $w$-walk with $w=(UD)^k$ is a \emph{down-up alternating  walk} of length $2k$. 
Explicitly, a down-up alternating  walk from $x$ to $y$ of length $2k$ is a sequence   $x=y_0 \geq y_{\frac{1}{2}} \leq y_1 \geq y_{\frac{3}{2}} \leq y_2 \geq \cdots  \geq y_{k-\frac12} \leq y_k =\lambda$ such that any two consecutive elements are adjacent in the Hasse diagram of $P$. If $x \in P_n$, then clearly 
$y_i \in P_n$ and $y_{i+\frac12} \in P_{n-1}$ for any integer $i$. 
We denote by $m_k(x \rightarrow y)$ the number of down-up alternating   walks from $x$ to $y$ and of length $2k$. 

Let $e(x)$ be the number of saturated chains from $\hat 0$ to $x$. In other words, $e(x)$ is the $U^n$-walk from $\hat 0$  to $x$ if $x \in P_n$,
which can also be expressed as  
$e(x)= \langle D^n x, \hat 0\rangle$.

\begin{theorem} \label{thm:UD1} 
Let $P$ be a $r$-differential poset and  $n, k \in \mathbb{N}$. Then for $x \in P_n$,  we have the identity 
\begin{eqnarray} \label{eq:UD-even} 
e(x)  (rn)^k = \sum_{y \in P_n} m_{k}(x \rightarrow y) e(y).
\end{eqnarray} 
\end{theorem}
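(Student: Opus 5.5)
We express both sides through the bilinear form and then apply \Cref{thm:UD} and \Cref{cor:DU}. By definition, $(UD)^k x = \sum_{y} m_k(x\rightarrow y)\, y$, and every $y$ in the support lies in $P_n$. Using $e(y)=\langle D^n y,\hat 0\rangle$ and linearity, the right-hand side of \eqref{eq:UD-even} equals
\[
\sum_{y\in P_n} m_k(x\rightarrow y)\langle D^n y,\hat 0\rangle=\langle D^n (UD)^k x,\hat 0\rangle .
\]
So it suffices to show $\langle D^n (UD)^k x,\hat 0\rangle=(rn)^k\langle D^n x,\hat 0\rangle$.

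First I will expand $(UD)^k$ by \Cref{thm:UD}, which gives
\[
D^n(UD)^k=\sum_l r^{k-l}S(k,l)\,D^nU^lD^l .
\]
Since $x\in P_n$, the vector $D^l x$ lies in $KP_{n-l}$, and $D^nU^lD^lx$ lies in $KP_0=K\hat 0$. Next I apply \Cref{cor:DU} to $D^nU^l$:
\[
D^nU^l=\sum_i r^i\binom{n}{i}\binom{l}{i}i!\,U^{l-i}D^{n-i}.
\]
Acting on $D^lx$ of rank $n-l$, the operator $D^{n-i}$ kills everything unless $n-i\le n-l$, so $i\ge l$. Together with $i\le l$ this forces $i=l$. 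Hence
\[
D^nU^lD^lx=r^l\binom{n}{l}l!\,D^{n}x=r^l (n)_l\,D^nx,
\]
where $(n)_l=n(n-1)\cdots(n-l+1)$ is the falling factorial. The vanishing for $i<l$ is the step where care is needed: one has to note that $D^{n-i}$ applied to an element of rank $n-l<n-i$ is $0$, because $P$ has nothing below rank $0$.

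Substituting, I get
\[
\langle D^n(UD)^kx,\hat 0\rangle=\sum_l r^{k-l}S(k,l)\,r^l(n)_l\,e(x)=r^k e(x)\sum_l S(k,l)(n)_l .
\]
The classical identity $\sum_l S(k,l)(n)_l=n^k$ then finishes the proof. It has a one-line combinatorial justification: count the functions $[k]\to[n]$ according to the set partition of $[k]$ formed by their fibers.

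Overall the argument is mechanical. The main points to check are the rank bookkeeping that kills every term except $i=l$, and the adjointness and linearity used to rewrite the right-hand side as $\langle D^n(UD)^kx,\hat 0\rangle$.
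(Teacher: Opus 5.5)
Your proposal is correct and follows essentially the same route as the paper. Like the paper, you rewrite the right-hand side as $\langle D^n(UD)^k x,\hat 0\rangle$, expand with \Cref{thm:UD} and \Cref{cor:DU}, use the rank argument to kill every term with $i<l$, and finish with $\sum_l S(k,l)(n)_l=n^k$ (which you state correctly with $S(k,l)$; the paper's text has the typo $S(n,l)$).
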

\begin{proof}  
Applying $(UD)^k$ to $x \in P_n$, we obtain 
\[
(UD)^k x = \sum_{y \in P_n} m_k(x \rightarrow y)  \ y. 
\]
Hence 
\[
 D^n (UD)^k x = \sum_{y \in P_n} m_k(x \rightarrow y)  e( y) \ \hat 0,
\]
 and the right hand side of \eqref{eq:UD-even} can be expressed as 
\[
\langle D^n (UD)^k x,\hat 0  \rangle.  
\]
Using \Cref{thm:UD} and \Cref{cor:DU}, we have
\begin{eqnarray} \label{eq:UD-full}
 D^n (UD)^k  = D^n \sum_l r^{k-l} S(k,l) U^l D^l,    \nonumber \\
 =  \sum_{l} r^{k-l}S(k,l) \sum_{i\leq  l } r^i\binom{n}{i} \binom{l}{i} i! U^{l-i} D^{l+n-i}. 
\end{eqnarray}
Apply \eqref{eq:UD-full} to $x\in P_n$. The only way to get $\hat 0$ is when $i=l$, where the coefficient is simplified to 
\[
e(x) r^k \sum_{l} S(k,l) (n)_l = e(x) r^k n^k,
\]
where $(t)_l=t(t-1)\cdots (t-l+1)$ is the $l$-th falling factorial of $t$. 
The last equation is obtained by applying 
the basic polynomial identity $t^k=\sum_{l} S(n,l) (t)_l$
for Stirling numbers of the second kind; See e.g. \cite[Chapter 1.4]{EC2}.) 
\end{proof}

In Young's lattice, when $x$ is the integer partition $\lambda$, we have  $r=1$  and $e(\lambda) = f^\lambda$, the number of standard Young tableaux of shape $\lambda$. 
Hence Identity \eqref{Eq:Id2} is a special case of \eqref{eq:UD-even}.

%
%\begin{corollary}  
%\begin{eqnarray}
%   n^k f^\mu =\sum_{\lambda\vdash n} f^\lambda m_{k}(\mu \rightarrow \lambda),   \label{id1:general} 
%\end{eqnarray}
%where $m_k(\mu \rightarrow \lambda)$ is the number of vacillating tableaux from $\mu$ to $\lambda$ of length $2k$. 
%\end{corollary} 

Similarly,  one can consider down-up alternating walks of odd length $2k+1$. That is, 
a sequence $x=y_0 \geq y_{\frac{1}{2}} \leq y_1 \geq y_{\frac{3}{2}} \leq y_2 \geq \cdots  \geq y_{k-\frac12} \leq y_k \geq y_{k+\frac12}=\lambda$ such that $
y_i \in P_n$ and $y_{i+\frac12} \in P_{n-1}$ for any integer $i$. 
Denote by  $m_{k+\frac12}(x \rightarrow y)$ the number of down-up alternating walks of length $2k+1$.
\begin{corollary} \label{thm:UD2}
Let $P$ be a $r$-differential poset and  $n, k \in \mathbb{N}$ with $n \geq 1$. Then for $x \in P_n$, we have the identity 
\begin{eqnarray} \label{eq:UD-odd}
e(x)  (rn)^k= \sum_{y \in P_{n-1} } m_{k+\frac{1}{2}} (x \rightarrow y)  e(y) 
\end{eqnarray}
\end{corollary}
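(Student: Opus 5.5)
The plan is to reduce \eqref{eq:UD-odd} to \Cref{thm:UD1} by peeling off the final down step. A down-up alternating walk of length $2k+1$ from $x$ to $y \in P_{n-1}$ is exactly a down-up alternating walk of length $2k$ from $x$ to some $z \in P_n$, followed by one down step from $z$ to $y$ with $z$ covering $y$. Equivalently, in operator form, $D(UD)^k x = \sum_{y \in P_{n-1}} m_{k+\frac12}(x \rightarrow y)\, y$. This gives
\[
m_{k+\frac12}(x \rightarrow y) = \sum_{z \in P_n,\ y \in C^-(z)} m_k(x \rightarrow z).
\]

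Substituting this into the right-hand side of \eqref{eq:UD-odd} and interchanging the finite sums gives
\[
\sum_{y \in P_{n-1}} m_{k+\frac12}(x \rightarrow y)\, e(y) = \sum_{z \in P_n} m_k(x \rightarrow z) \sum_{y \in C^-(z)} e(y).
\]
The inner sum is $e(z)$, because every saturated chain from $\hat 0$ to $z$ passes through exactly one element $y$ covered by $z$ just before reaching $z$. In the language of the bilinear form this is $\langle D^{n-1} D z, \hat 0\rangle = \langle D^n z, \hat 0\rangle$. The hypothesis $n \geq 1$ is used here: it guarantees that the down step exists and that $P_{n-1}$ is defined.

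The right-hand side therefore equals $\sum_{z \in P_n} m_k(x\rightarrow z)\, e(z)$, which is $e(x)(rn)^k$ by \Cref{thm:UD1}. An equivalent one-line route is to note that $\langle D^{n-1} D (UD)^k x, \hat 0\rangle = \langle D^n (UD)^k x, \hat 0\rangle$, which is exactly the quantity already computed in the proof of \Cref{thm:UD1}.

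No step poses a real obstacle. The only point needing care is the bookkeeping of the last half-step: the walk ends at a half-integer index in $P_{n-1}$, and the relevant chain count is $e(y)$ with $y$ of rank $n-1$.
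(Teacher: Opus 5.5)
Your proof is correct and is the paper's argument. The paper likewise observes that the right-hand side counts $D^{n-1}\bigl(D(UD)^k\bigr)=D^n(UD)^k$-walks from $x$ to $\hat 0$ and then invokes \Cref{thm:UD1}, which is exactly your one-line route; your sum over $z\in P_n$ using $\sum_{y\in C^-(z)}e(y)=e(z)$ just spells out the same step in coordinates.
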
 
\begin{proof}
Observe that the right-hand side of \eqref{eq:UD-odd} is exactly the number of $w$-walks from $x$ to $\hat 0$, where $w=D^{n-1} \big( D(UD)^k\big) = D^n (UD)^k$, which by \Cref{thm:UD1} is $e(x) (rn)^k$.    
%    Applying $D(UD)^k$ to $x\in P_n$, we obtain 
 %   \[
  %  D(UD)^n (x) = \sum_{y \in P_{n-1}} m_{k+\frac12} (x \rightarrow y) \ y 
  %  \]
  % Hence 
  % \[
  % \sum_{y \in P_{n-1}} m_{k +\frac12}(x \rightarrow y) \ e(y) = \langle  D^{n-1} ( D (UD)^k x, \hat 0 \rangle 
 %   = \langle D^n (UD)^k x, \hat 0 \rangle  = e(x) \ n^k. 
 %  \] 
\end{proof}

\subsection{Up-down alternating walks}

In this subsection we consider a variant  of the down-up alternating walks that  begins with an  up step. 
Explicitly,  an \emph{up-down alternating  walk}  from $x \in P_n$ in an 
$r$-differential poset $P$ is a sequence 
$x=y_0 \leq y_{\frac{1}{2}} \geq y_1 \leq y_{\frac{3}{2}} \geq y_2 \geq \cdots $ such that $
y_i \in P_n$ and $y_{i+\frac12} \in P_{n+1}$ for any integer $i$. 

First we give an expansion of the operator $(DU)^k$, which is analogous to Identity \eqref{eq:(UD)^n} and was known in \cite[Sec.3.8]{Fomin95-2}. 
It can be deduced algebraically by computing $(DU)^k$ as $D (UD)^{k-1} U$, using Identity \eqref{eq:(UD)^n}
with $k$ replaced by $k-1$, and the following equations
\[
D^l U = UD^l + rl U^{l-1}, \qquad \qquad  DU^l =U^l D + rlU^{l-1}. 
\]
Below we present a combinatorial argument that is similar to the proof of  
Identity \eqref{eq:(UD)^n}, which reveals the connection to set partitions.

\begin{theorem} Let $k \in \mathbb{N}$. The following identity holds in any $r$-differential poset:
\begin{eqnarray} 
(DU)^k =\sum_l r^{k-l} S(k+1,l+1) U^l D^l.   \label{eq:(DU)^n} 
\end{eqnarray}
\end{theorem}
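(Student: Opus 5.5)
We mimic the combinatorial proof of \Cref{thm:UD}. Write $(DU)^k = D_1U_1D_2U_2\cdots D_kU_k$. Every $U_j$ already lies to the right of $D_1,\dots,D_j$, so only the pairs $(D_i,U_j)$ with $i>j$ are out of order. To make the indexing uniform, introduce a dummy operator $U_0$ on the far left. In the normally ordered word, each $D_i$ with $i\ge 1$ must pass every $U_j$ with $j<i$, including $U_j$ with $j\ge 1$ sitting between it and the right end. We push the $D$-operators to the right in the order $i=1,\dots,k$, using $DU=UD+rI$ at each crossing. When $D_i$ meets a $U_j$ with $j\ge i$, either it passes, giving $U_jD_i$, or the pair cancels and contributes a factor $r$. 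Since $U_j$ sits to the right of $D_i$ exactly when $j\ge i$, the potential partner of $D_i$ is some $U_j$ with $j\ge i$.

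Next we set up the encoding. A complete expansion is determined by a partial matching: each $D_i$ either survives or cancels with a single $U_j$, $j\ge i$, and each $U_j$ cancels with at most one $D_i$. Each $U$ can be cancelled only once, because the relation consumes it. Shift the indices so that $D_i$ becomes the vertex $i$ and $U_j$ becomes the vertex $j+1$, using vertices in $[k+1]$. Declare $(i,j+1)\in$ Arc$(\pi)$ exactly when $D_i$ cancels $U_j$; the condition $j\ge i$ then becomes $i<j+1$. Each vertex $a\in[k+1]$ has at most one outgoing arc (coming from $D_a$, with $a\le k$) and at most one incoming arc (coming from $U_{a-1}$, with $a\ge 2$). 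Graphs on $[k+1]$ with these properties, and with all arcs pointing rightward, are exactly the standard representations of set partitions of $[k+1]$.

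The counting then reads off directly. If $k-l$ cancellations occur, the term is $r^{k-l}U^lD^l$, and the partition has $(k+1)-(k-l)=l+1$ blocks. Conversely, every set partition of $[k+1]$ with $l+1$ blocks arises from exactly one cancellation pattern. Summing over patterns gives the coefficient $r^{k-l}S(k+1,l+1)$, which is the claimed identity.

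The main obstacle is justifying that the expansion is faithfully encoded by these matchings. We must check that the terms produced by successive pushes are in bijection with admissible partial matchings with no overcounting: a $U_j$ already cancelled is gone, and a surviving $D_i$ must still cross the remaining $U$'s to its right. This is the same bookkeeping as in the proof of \Cref{thm:UD}, where the expansion of the product is a sum over choices at each crossing. If it becomes delicate, we fall back on the algebraic route: write $(DU)^k=D(UD)^{k-1}U$, apply \Cref{thm:UD} together with $D U^l=U^lD+rlU^{l-1}$ and $D^lU=UD^l+rlD^{l-1}$, and verify the recurrence $S(k+1,l+1)=S(k,l)+(2l+1)S(k-1,l)+\dots$. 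The cleaner option is to verify the standard recurrence $S(k+1,l+1)=S(k,l)+(l+1)S(k,l+1)$ by induction on $k$ from $(DU)^{k}=DU(DU)^{k-1}$ and the relation $DU\,U^lD^l=U^{l+1}D^{l+1}+r(2l+1)U^lD^l+r^2l^2U^{l-1}D^{l-1}$.
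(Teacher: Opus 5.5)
Your argument is correct and is essentially the paper's proof: the paper labels $(DU)^k$ as $D_1U_2D_2U_3\cdots D_kU_{k+1}$, which builds in your shift $U_j\mapsto j+1$ from the start. It then reads each cancellation of $D_i$ with $U_j$ (necessarily $i<j$) as an arc of the standard representation of a set partition of $[k+1]$ with $l+1$ blocks, exactly as in your second and third paragraphs.

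There are two slips to clean up. First, your opening sentences have the out-of-order pairs backwards: they are the $(D_i,U_j)$ with $i\le j$, as you then correctly use, and the dummy $U_0$ plays no role. Second, in the fallback the correct relation is $DU\,U^lD^l=U^{l+1}D^{l+1}+r(l+1)U^lD^l$; your three-term formula is the one for $DU^lD^lU$. With the correct relation, $(DU)^k=DU(DU)^{k-1}$ yields $S(k+1,l+1)=S(k,l)+(l+1)S(k,l+1)$ immediately.
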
 
\begin{proof}
\iffalse 
Then we have 
\begin{eqnarray*}
   D (UD)^{k-1} U& =&  \sum_l r^{k-1-l} S(k-1, l) DU^{l} D^{l}U  \\ 
     &=& \sum_l r^{k-1-l}S(k-1, l) \big( u^{l+1}D^{l+1} + r(2l+1) U^l D^l + r^2l^2 U^{l-1}D^{l-1} \big).   
\end{eqnarray*}
The coefficient of $U^i D^i$ is 
\begin{eqnarray*} 
& &  r^{k-i}S(k-1, i-1) +r^{k-1-i} r(2i+1) S(k-1, i) +r^{k-1-i-1}r^2(i+1)^2 S(k-1, i+1) \\
&=& r^{k-i}[S(k-1, i-1)+iS(k,i)] + r^{k-i} (i+1) [ S(k-1, i)+(i+1)S(k-1, i+1)] \\ 
&=&  r^{k-i} [S(k,i)+(i+1) S(k, i+1)] \\
&=& r^{k-i} S(k+1, i+1), 
\end{eqnarray*}
which finishes the proof. 
\fi 

The case for $k=0$ is trivial. Assume $k \geq 1$.  
Write $(DU)^k$ as $D_1 U_2 D_2 U_3 \cdots D_k U_{k+1}$ and push the $D$ operators to the right of $U$'s. In the final form each copy of $U^lD^l$ means there is a process with exactly $k-l$  cancellations between the pairs of the form $(D_{i_r}, U_{j_r})$, 
with  $i_r < j_r$ for $r=1, \cdots, k-l$. 
Each such a process corresponds to a unique set partition $\pi$ of $[k+1]$ with $l+1$ blocks, which is given by letting $(i, j)$ be an arc of  $\pi$ whenever $D_i$ cancels with $U_j$. 
\end{proof}
Clearly Identity \eqref{eq:(UD)^n} follows from \eqref{eq:(DU)^n}. In fact, these two identities are equivalent. 
%A combinatorial proof of \eqref{eq:(DU)^n} is presented in \cite{Hariharan}, similar to the one for \eqref{eq:(UD)^n}. 

Let $t_k(x \rightarrow y)$ %(resp. $t_{k+\frac12}(x \rightarrow y)$) 
be the number of up-down alternating walks of length $2k$ %(resp. $2k+1$) 
from $x$ to $y$, where 
 $x , y \in P_n$ for some $n \in \mathbb{N}$. Similarly let  
 $t_{k+\frac12}(x \rightarrow y)$ be the number of such walks of odd length $2k+1$ 
 from $x \in P_n$ to $y \in P_{n+1}$. 
We obtain the following analogous results of \Cref{thm:UD1} and \Cref{thm:UD2}. 

\begin{theorem}
    Let $P$ be an $r$- differential poset and $x \in P_n$. Then we have 
    \begin{eqnarray} \label{eq:DU-even} 
        e(x) r^k(n+1)^k = \sum_{y \in P_n} t_k(x \rightarrow y)  e(y) \qquad \text{ for } n, k \in \mathbb{N}, 
    \end{eqnarray}
    and 
    \begin{eqnarray} \label{eq:DU-odd} 
        e(x) r^{k} (n+1)^{k} = \sum_{y \in P_{n+1}} t_{k-\frac12}(x \rightarrow y)  e(y), \qquad \text{for } n , k \in \mathbb{N} \text{ and } k \geq 1. 
    \end{eqnarray}
\end{theorem}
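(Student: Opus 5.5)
The plan mirrors the proof of \Cref{thm:UD1}. Applying $(DU)^k$ to $x \in P_n$ gives $\sum_{y\in P_n} t_k(x\rightarrow y)\, y$. Hence the right-hand side of \eqref{eq:DU-even} equals $\langle D^n (DU)^k x, \hat 0\rangle$.

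Next I expand using \eqref{eq:(DU)^n} and \Cref{cor:DU}:
\[
D^n (DU)^k = \sum_l r^{k-l} S(k+1,l+1) \sum_{i\le l} r^i \binom{n}{i}\binom{l}{i} i!\, U^{l-i} D^{n+l-i}.
\]
Applied to $x\in P_n$, the word $U^{l-i}D^{n+l-i}$ can only land on $\hat 0$ when $l=i$. In that case $D^n x$ paired with $\hat 0$ gives $e(x)$. So the coefficient is
\[
e(x)\, r^k \sum_l S(k+1,l+1)\,(n)_l .
\]

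The only nontrivial step is the identity
\[
\sum_l S(k+1,l+1)\,(n)_l = (n+1)^k .
\]
I would prove it combinatorially. The quantity $(n+1)^k$ counts maps $f\colon [k]\to\{0,1,\dots,n\}$. Extend $f$ by $f(k+1)=0$. Classify the maps by the set partition of $[k+1]$ into fibers: the block containing $k+1$ is sent to $0$, and the remaining $l$ blocks receive distinct values from $[n]$ in $(n)_l$ ways. Alternatively, one can use $t^{k}$ expansions: $(t+1)^k=\sum_l S(k+1,l+1)(t)_l$ is a standard identity, e.g.\ in \cite[Chapter 1.4]{EC2}. This establishes \eqref{eq:DU-even}.

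For \eqref{eq:DU-odd} I argue as in \Cref{thm:UD2}. A walk of length $2k-1$ starting up from $x$ follows the word $U(DU)^{k-1}$ and ends in $P_{n+1}$. Following it by $D^{n+1}$ down to $\hat 0$ gives, on the right-hand side,
\[
\langle D^{n+1}U(DU)^{k-1}x,\hat 0\rangle = \langle D^n (DU)^k x,\hat 0\rangle .
\]
By the even case this equals $e(x)r^k(n+1)^k$.

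The main obstacle is only the Stirling identity with the shifted index $S(k+1,l+1)$; the rest is the same bookkeeping as in \Cref{thm:UD1}.
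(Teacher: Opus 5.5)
Your proposal is correct and follows the paper's proof essentially step for step. It uses the same expansion of $D^n(DU)^k$ via the $(DU)^k$ identity and \Cref{cor:DU}, the same observation that only the $i=l$ terms survive on $x\in P_n$, and the same reduction of the odd case via $D^{n+1}U(DU)^{k-1}=D^n(DU)^k$.

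The only difference is minor. To evaluate $\sum_l S(k+1,l+1)(n)_l$, the paper writes $(n)_l=(n+1)_{l+1}/(n+1)$ and invokes $t^{k+1}=\sum_m S(k+1,m)(t)_m$. Your fiber-counting argument (maps $[k]\to\{0,\dots,n\}$ extended by $k+1\mapsto 0$) proves $(n+1)^k=\sum_l S(k+1,l+1)(n)_l$ directly and is equally valid.
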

\begin{proof}
    For \eqref{eq:DU-even}, 
note that $(DU)^k x = \sum_{y \in P_n} t_k(x \rightarrow y)  \ y$. Hence the right-hand side of \eqref{eq:DU-even} is 
$\langle D^n (DU)^k x, \hat 0\rangle$. 
One computes
\begin{eqnarray*}
    D^n (DU)^k x= \sum_{k} r^{k-l}S(k+1, l+1) \sum_{i \leq l} r^i\binom{n}{i} \binom{l}{i} i! U^{l-i} D^{n+l-i} x. 
\end{eqnarray*}
The only way to get $\hat 0$ from the right-hand side of the above equation is when $i=l$, and the coefficient of $\hat 0$ is 
\[
e(x) \sum_{l} r^k S(k+1, l+1) (n)_l 
= \frac{e(x)r^k }{n+1} \sum_l S(k+1, l+1) (n+1)_{l+1} =  \frac{e(x) r^k}{n+1}  (n+1)^{k+1},
\]
which agrees with the left-hand side of \eqref{eq:DU-even}. 

For \eqref{eq:DU-odd}, the right-hand side is the coefficient of $\hat 0$ in $D^{n+1} (U (DU)^{k-1}) x$, which is exactly $D^n (DU)^{k} x$. 
 Hence \eqref{eq:DU-odd} also follows from the above proof.  
\end{proof}

\subsection{Another  application of Theorem 1}

Another fundamental identity proved in \cite{HL05} for vacillating tableaux is 
\begin{eqnarray} \label{Eq:extra}
B_{2k}  = \sum_{\lambda \vdash n} ( m_k(\lambda))^2 = m_{2k}((n)), \qquad \text{for } n \geq 2k, 
\end{eqnarray} 
where $B_m$ is the $m$-th Bell number that counts all set partitions of $[n]$. 
Similar identities involving sums of products of  counts of simplified \vtx\ and their combinatorial interpretations are 
 discussed in \cite{BHPYZ24} and \cite{CDDSY}. 
In this subsection we extend 
Identity  \eqref{Eq:extra} 
 to differential posets. % using \eqref{eq:(UD)^n}. 

Let $x, z \in P_n$ and $k=k_1+k_2$ with $k_1, k_2 \in \mathbb{N}$. Clearly 
\[
m_k(x \rightarrow z) = \sum_{y \in P_n} m_{k_1}(x \rightarrow y) m_{k_2}(y \rightarrow z). 
\]
In particular, when $x=z$, after reversing the walks from $y$ to $z$, we have 
%\tcp{[maybe say after reversing the walks $y\rightarrow x$]}
%\tcy{[Changed]}
\[
\sum_{y \in P_n } m_{k_1}(x \rightarrow y) m_{k_2} (x \rightarrow y) = m_k(x\rightarrow x)=\langle (UD)^k x, x\rangle. 
\]
Then we can apply Identity \eqref{eq:(UD)^n}. 

    If $e(x)=1$, then $U^lD^l x =x$ for $l \leq n$, and $U^lD^l=0$ otherwise. For such an $x$,   we have 
    \[
    m_k(x\rightarrow x)= \sum_{y \in P_n } m_{k_1}(x \rightarrow y) m_{k_2} (x \rightarrow y)  = \sum_{l=1}^n r^{k-l}S(k,l). 
    \]
    This happens when $x=(n)$ or $(1^n)$ in  Young's lattice $\mathbb{Y}$, or the element $11\cdots 1$ in the Fibonacci differential poset $Z(1)$. 
     In particular, the above equation generalizes an identity of Martin and Rollet \cite{MR98}, corresponding to the case $x=(n)$  in $\mathbb{Y}$, for which 
      Krattenthaler     \cite{Kra23} provided a bijective proof.  
    If in addition, $n \geq k$, we obtain 
    \[
    \sum_{y \in P_n } m_{k_1}(x \rightarrow y) m_{k_2} (x \rightarrow y)  = \sum_{l=1}^k r^{k-l} S(k,l)  = B_k(r), 
    \]
    where $B_k(r)$ is the number of $r$-colored set partitions on  $[k]$. Here, an $r$-colored set partition of $[k]$ is a pair $\Lambda=(\pi, \varphi)$, consisting of a set partition $\pi$ of $[k]$ with a map $\varphi: \mathrm{Arc}(\pi) \rightarrow [r]$ labeling its arcs.    
    Identity \eqref{Eq:extra} is the special case with $k_1=k_2$ and $x=(n)\geq 2k_1$ in $\mathbb{Y}$. 

The general case for $r$-differential posets is given by the following theorem. 
\begin{theorem}  Let $P$ be an $r$-differential poset and $x \in P_n$. 
  For $y \leq x$ in $P$, let $e(y,x)$ denote the number of maximal chains of the interval $[y,x]$.  Then for any fixed $k_1, k_2 \in \mathbb{N}$ and $k=k_1+k_2$, 
    \[
    m_k(x \rightarrow x)=\sum_{y \in P_n } m_{k_1}(x \rightarrow y) m_{k_2} (x \rightarrow y) = \sum_{l} r^{k-l} S(k,l) \sum_{y \in P_{n-l}} e(y,x)^2. 
    \]
\end{theorem}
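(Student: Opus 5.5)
The first equality was already established just before the statement: splitting a walk of length $2k$ at step $2k_1$ gives $m_k(x\rightarrow x)=\sum_{y\in P_n} m_{k_1}(x\rightarrow y)\,m_{k_2}(y\rightarrow x)$. Reversing a down-up alternating walk from $y$ to $x$ (both in $P_n$) yields a down-up alternating walk from $x$ to $y$, so $m_{k_2}(y\rightarrow x)=m_{k_2}(x\rightarrow y)$. Also $m_k(x\rightarrow x)=\langle (UD)^k x, x\rangle$, since $(UD)^k x=\sum_{z\in P_n} m_k(x\rightarrow z)\,z$. It remains to evaluate this inner product.

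For that, I will apply \Cref{thm:UD} and use bilinearity:
\[
\langle (UD)^k x, x\rangle=\sum_l r^{k-l}S(k,l)\,\langle U^lD^l x, x\rangle .
\]
All sums here are finite, because $D^l x$ is a finite combination of elements of $P_{n-l}$. The term vanishes for $l>n$, since then $D^lx=0$. By the adjointness of $U$ and $D$, applied $l$ times,
\[
\langle U^l D^l x, x\rangle=\langle D^l x, D^l x\rangle .
\]
Next I expand $D^l x=\sum_{y\in P_{n-l}} c_y\, y$, where $c_y$ counts the $D^l$-walks from $x$ down to $y$. Such a walk is exactly a saturated chain $y=z_0\lessdot z_1\lessdot\cdots\lessdot z_l=x$, that is, a maximal chain of the interval $[y,x]$. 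Hence $c_y=e(y,x)$, with $c_y=0$ when $y\not\le x$; this matches setting $e(y,x)=0$ in that case, which I will state as the convention for the sum. Therefore $\langle D^lx,D^lx\rangle=\sum_{y\in P_{n-l}} e(y,x)^2$, and substituting gives the claimed formula.

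No step here is a real obstacle. The only points needing care are checking that the bilinear form is applied only to finite combinations, which holds by local finiteness (D1), and justifying the identification $c_y=e(y,x)$ as maximal chains of $[y,x]$, which holds because $P$ is graded. As a consistency check, when $e(x)=1$ every $y\le x$ of rank $n-l$ has $e(y,x)=1$, and there is exactly one such $y$. So the formula reduces to $\sum_{l\le n} r^{k-l}S(k,l)$, in agreement with the special case discussed before the theorem.
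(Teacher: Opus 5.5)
Your proposal is correct and follows the paper's route. You write $m_k(x\rightarrow x)=\langle (UD)^k x, x\rangle$, expand it with \Cref{thm:UD}, and evaluate each term as $\langle U^lD^l x, x\rangle=\langle D^l x, D^l x\rangle=\sum_{y\in P_{n-l}} e(y,x)^2$; you also make explicit the adjointness step and the convention $e(y,x)=0$ for $y\not\le x$, which the paper leaves implicit.
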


We remark that Identity \eqref{eq:(UD)^n} can also be used to obtain explicit formulas  for special $m_k(\mu \rightarrow \lambda)$ in Young's lattice when $e(y, x)$ is easy to compute. Examples include 
Formulas (1.5) and (1.6) in \cite{Kra23} and the results in Section 4 of \cite{DY25}. 

\section{Algorithmic Bijections  in Young's Lattice} 
The $r$-th powers of Young's lattice,  $\mathbb{Y}^r$, are typical examples of $r$-differential posets and bear special significance in algebraic combinatorics and representation theory.
In the next two sections we present algorithmic bijections for 
the identities in Section 2 in the case $P=\mathbb{Y}^r$. We begin with  the case $r=1$. 

When $P=\mathbb{Y}$  and $x=\mu$ is a shape of size $n$, we have 
$e(\mu)=f^\mu$, and Identities \eqref{eq:UD-even}, \eqref{eq:UD-odd}, \eqref{eq:DU-even}
and \eqref{eq:DU-odd} become  
\begin{eqnarray} \label{eq:Young1}
     n^k f^\mu &=& \sum_{\lambda \vdash n} m_k(\mu \rightarrow \lambda) f^\lambda = \sum_{\lambda \vdash n-1} m_{k+\frac12} (\mu\rightarrow \lambda) f^\lambda, 
     \end{eqnarray} 
     \begin{eqnarray} \label{eq:Young2} 
    (n+1)^k f^\mu  = \sum_{\lambda\vdash n}  t_{k}(\mu \rightarrow\lambda) \ f^\lambda
     = \sum_{\lambda \vdash n+1} t_{k-\frac12}(\mu \rightarrow\lambda) \ f^\lambda ,  \label{eq:Young3}
\end{eqnarray}
%for any $n, k \in \mathbb{N}$, 
where $m_k(\mu \rightarrow \lambda)$ and $m_{k+\frac12}(\mu \rightarrow \lambda)$ count  vacillating tableaux from $\mu$ to $\lambda$ 
with  down-up alternating steps, 
while $t_k(\mu \rightarrow \lambda)$ and $t_{k+\frac12}(\mu \rightarrow \lambda)$ count those with up-down alternating steps.  
%$t_k^\lambda$ is the number of tableau-sequences
%$\lambda_0=(\mu) \subseteq \lambda_{\frac12} \supseteq \lambda_1 
%\subseteq \lambda_{\frac32} \supseteq \lambda_2 
%\subseteq 
%\cdots 
%\supseteq \lambda_{k-1} 
%\subseteq \lambda_{k-\frac{1}{2}} \supseteq \lambda_k  =\lambda 
%$ 
%and consecutive partitions differ by exactly one square. 
%To distinguish with the case before, call such a \vt\ up-down \vt. 

Halverson and Lewandowski gave a bijective proof of Identity \eqref{eq: Identity1} based on a deletion-insertion process, in which the deletion step uses jeu de taquin and the insertion step is given by RSK row insertion.
In this section, we present a simpler deletion--insertion process that avoids the use of jeu de taquin. We then apply this new process to obtain bijective proofs of \eqref{eq:Young1} and \eqref{eq:Young2}. 

We assume that the reader is familiar with the basic definition of RSK insertion on a \emph{partial tableau}, which is a filling of some shape $\lambda$ using distinct integers such that the entries are increasing along each row and each column.   A partial tableau is \emph{standard} if the set of entries is exactly $[n]$, where $n=|\lambda|$. 
%For a full description of these two algorithms, see \cite{HL05}.  

The main step of our bijection is the \emph{lifted insertion}, which inserts an integer 
$i \in [n+1]$ into a SYT  $T$  of size $n$ and yields a STY $S$ of size $n+1$. 

\noindent 
\textbf{Lifted insertion}: Input: a SYT  $T$ of size $n$ and an integer $i$ with $i \in [n+1]$, 
\begin{enumerate} 
\item (Lift): increase every entry $j$ of $T$ satisfying $j \geq i$ by 1; 
\item (Insert): insert $i$ to the tableau obtained in step 1 via  RSK row insertion. 
\end{enumerate}
The output is the SYT $S$.

\medskip 

\noindent 

\begin{proof}[Proof of Identity \eqref{eq:Young1}]
Let $\mu \vdash n$. 
We construct a bijection  $\phi_n^k$ 
\begin{eqnarray} \label{bijection:1}
\phi_n^k: \mathcal{SYT}(\mu) \times [n]^k  \xleftrightarrow{\text{ bijectively } } 
\bigcupdot_{\lambda \vdash n } \mathcal{SYT}(\lambda) \times \mathcal{VT}_{k}(\mu \rightarrow\lambda),
\end{eqnarray}
where $\mathcal{SYT}(\mu)$ is the set of SYTs of shape $\mu$ and 
$\mathcal{VT}_{k}(\mu \rightarrow\lambda)$ is the set of (down-up) vacillating tableaux from $\mu$ to $\lambda$ of length $2k$. 

 Let $T$ be a SYT  of shape $\mu$ and $\{i_1, \dots, i_k\}$ in $[n]^k$  be an integer sequence of length $k$. 
First we define a sequence of SYTs iteratively: 
%\tcp{[Should this be defined iteratively?]} \tcy{[changed]} 
the $0$-th tableau is $T^{(0)}=T$.  Then for integers $j= 1, \dots, k$, 
\begin{enumerate}
    \item $T^{j-\frac12}$ is the SYT of size $n-1$ obtained from $T^{(j-1)}$ by removing the square containing  $n$.  
    \item $T^{(j)}$ is the SYT of size $n$ obtained from $T^{(j-\frac12)}$ by lifted-inserting $i_{j}$. 
\end{enumerate}
%Note that  $T^{(j-\frac12)}$ is  a STY of size $n-1$ and $T^{(j)}$ is a SYT of size $n$.

Let $\lambda_m$ be the shape of $T^{(m)}$ for all integral and half-integral indices $m$, and $\lambda=\lambda_k$ be the shape  of the last tableau $T^{(k)}$. Let $\Gamma$ the sequence  $(\lambda_0, \lambda_\frac12, \lambda_1, \dots, \lambda_k)$. 
Define the image of $(T, \bsy{i})$ under $\phi_n^k$ as 
\[
\phi_n^k(T, \bsy{i}   ) = \left(T^{(k)}, \Gamma \right). 
\]
The image  is clearly in $\mathcal{SYT}(\lambda) \times \mathcal{VT}_k(\mu\rightarrow \lambda)$. 

To show $\phi_n^k$ is a bijection, one notes that the initial tableau $T$ and the integer sequence $\bsy{i}$ can be recovered from the pair $T^{(k)}$ and 
  $\Gamma=(\lambda_0, \lambda_\frac12, \lambda_1, \dots, \lambda_k)$. 
For $j=k, k-1, k-2, \dots, 1$, 
\begin{enumerate}
    \item Given SYT $T^{(j)}$ and  $\lambda_{(j-\frac12)}$, there is a unique integer $i \in T^{(j)}$ and partial tableau $S^{(j)}$ of shape $\lambda_{j-\frac12}$ such that $T^{(j)}$ is obtained from $S$ by inserting $i$ via RSK. 
    Then  $i_j=i$ and    $T^{(j-\frac12)}$ is the standardization of $S^{(j)}$. 
    \item $T^{(j-1)}$ is obtained from $T^{(j-\frac12)}$ by adding $n$ to the square of $\lambda_j \setminus \lambda_{j-\frac12}$. 
\end{enumerate}
The initial tableau $T$ is just $T^{(0)}$.  This proves Identity \eqref{eq:Young1} 
for the case of even length.

For \vtx\ of odd length, 
given an element $(P, \Gamma)  \in \mathcal{SYT}(\lambda) \times \mathcal{VT}_{k}(\mu \rightarrow\lambda)$ for some $\lambda \vdash n$,
we can map it to the pair $(P', \Gamma')$,  where $P'$ is the SYT obtained from $P$ by removing the box containing $n$, and  $\Gamma'$ comes from appending $\lambda'$, the shape of $P'$,  to the end of $\Gamma$. 
Then $(P', \Gamma') \in \mathcal{SYT}(\lambda') \times \mathcal{VT}_{k+\frac12}(\mu \rightarrow\lambda')$  with  $\lambda' \vdash n-1$.  This, together with the bijection $\phi_n^k$ in \eqref{bijection:1}, gives a bijection from $\mathcal{SYT}(\mu) \times [n]^k$ to the disjoint union 
\[
\bigcupdot_{\lambda' \vdash n-1 } \mathcal{SYT}(\lambda') \times \mathcal{VT}_{k+\frac12}(\mu \rightarrow\lambda')
\]
and finishes the proof.  

\end{proof} 

\begin{example}  \label{ex:DI} 
Let  $n = 6$, $k=3$, and $\mu=(3,2,1)$.   We apply the map $\phi^3_6$ to the pair $(T, \bsy{i})$, where 
\[
T= \ytableausetup{smalltableaux} \begin{ytableau}
1 &2 &4  \\
3  & 6 \\
5
\end{ytableau}
\qquad \bsy{i}=(2,4, 3) 
\]
to obtain the $(T^{(j)})$ sequence as follows.
 \[
 \ytableausetup{smalltableaux}
 \begin{ytableau}
1 &2 &4  \\
3 & 6  \\
5
\end{ytableau} \ \to\ \begin{ytableau}
1 &2 &4 \\
3\\
5
\end{ytableau}\ \to\  \begin{ytableau}
1 & 2 & 5 \\
3\\
4\\
6
\end{ytableau}\ \to\  \begin{ytableau}
1  & 2  & 5\\
3  \\
4
\end{ytableau}\ \to\ \begin{ytableau}
1 &2  & 4\\
3 & 6\\
 5  
\end{ytableau}  \to\  \begin{ytableau}
1  & 2  & 4\\
3 \\
5
\end{ytableau}\ \to\ \begin{ytableau}
1 &2  & 3\\
4 & 5\\
 6  
\end{ytableau}  
\]

Thus, the image of $(T, \bsy{i})$ under $\phi_6^3$ is 
$(P, \Gamma)$, where  
\[ 
P= \ytableausetup{smalltableaux} \begin{ytableau}
1 &2 & 3  \\
4 & 5 \\
6
\end{ytableau},  \quad  
 \Gamma=  \ytableausetup{boxsize=7pt}  \left(\ydiagram{3,2,1},\ \ydiagram{3,1,1,}, \ \ydiagram{3,1,1,1},\ \ydiagram{3,1,1}, \ \ydiagram{3,2,1}, \ \ydiagram{3,1,1}, \ \ydiagram{3,2,1}
 \right). \] 
\end{example}

\medskip 

\begin{proof}[Proof of Identity \eqref{eq:Young2}] 
 We modify the map $\phi_n^k$. Denote by 
$\mathcal{VT}_k^{\uparrow}(\mu \rightarrow \lambda)$   the set of 
up-down \vtx \ from $\mu$ to $\lambda$ in $2k$ steps. 
We will define a bijiection $\widetilde{\phi}_n^k$ from the set 
$\mathcal{SYT}(\mu) \times [n+1]^{k}$ to 
\[
\bigcupdot_{\lambda \vdash n}  \mathcal{SYT}(\lambda) \times \mathcal{VT}^\uparrow_{k}(\mu \rightarrow\lambda), 
\]
which proves Identity \eqref{eq:Young2}. 
%the disjoin union of $  \mathcal{SYT}(\lambda) \times 
%\mathcal{VT}^\uparrow_k(\mu \rightarrow \lambda)$,  where $\lambda$ ranges over all shapes of size $n$. It then gives 
%a bijective proof of \eqref{eq:Young2}:
%\begin{eqnarray*} 
%\widetilde{DI}_n^k: \mathcal{SYT}(\mu) \times [n+1]^{k}   \xleftrightarrow{} 
%\bigcupdot_{\lambda \vdash n}  \mathcal{SYT}(\lambda) \times \mathcal{VT}^\uparrow_{k}(\mu \rightarrow\lambda).
%\end{eqnarray*}

%The bijection $\widetilde{DI}_n^k$ is constructed as follows. 
Let $(i_1, i_2, \ldots, i_{k}) \in [n+1]^k$ be an integer sequence of length $k$ and $T$ be a SYT of shape $\mu \vdash n$.  We define a sequence of SYTs iteratively: 
The $0$-th
tableau  $T^{(0)}$ is just $T$. 
For integers $j=1, \ldots, k$, 
\begin{enumerate}
    \item $T^{(j-\frac12)}$ is the SYT of size $n+1$ obtained from $T^{(j)}$ by lifted inserting $i_j$. 
    
    \item $T^{(j)}$ is the SYT of size $n$ obtained from $T^{j-\frac12}$ by removing the square containing $n+1$. 
\end{enumerate}
Let $P=T^{(k)}$. 
The image of $(T, (i_1, \dots, i_{k}))$ under $\widetilde{\phi}_n^k$ is 
given by the pair $(P, \Gamma)$, where $\Gamma$ is the sequence of  shapes of $(T^{(0)}, T^{(\frac12)}, T^{(1)}, \dots, T^{(k)})$. 

The reason that $\widetilde{\phi}_n^k$ is bijective is that we can recover the initial input $T$ and $(i_1, \dots, i_k)$ 
from $(P, \Gamma)$ step by step from $T^{(k)}=P$. For $j=k, k-1, \dots, 1$
\begin{enumerate}
    \item $T^{(j-\frac12)}$ can be obtained from $T^{(j)}$ by putting $n+1$ to the extra square in the shape of $T^{(j-\frac12)}$ but not in that of $T^{(j)}$. 
    
    \item The inverse of RSK can recover the integer $i_{j-1}$ and a unique tableau 
    $S^{(j-1)}$ such that $T^{(j-\frac12)}$ is obtained from $S^{(j-1)}$ by inserting $i_{j-1}$ via RSK. Let $T^{(j-1)}$ be the standardization of $S^{(j-1)}$. 
   
    \item The initial SYT $T$ of shape $\mu$ is just $T^{(0)}$. 
\end{enumerate}

For \vtx\ of odd length $2k-1$, given $(T, \bsy{i}) \in \mathcal{SYT}(\mu) \times [n+1]^k$,  we  follow the same construction of  $\widetilde{\phi}^k_n$ as described above for 
$j=1, 2, \dots, k$, except that we stop at  $T^{k-\frac12}$. 
Let $P'=T^{(k-\frac12)}$ and $\Gamma'$ be the sequence of shapes of $(T^{(0)}, T^{(\frac12)}, \dots T^{(k-\frac12)})$. 
Hence the set of images $(P', \Gamma')$  is the disjoint union 
\[
\bigcupdot_{\lambda \vdash n+1} \mathcal{SYT}(\lambda) \times \mathcal{VT}^\uparrow_{k-\frac12}(\mu \rightarrow\lambda), 
\]
as desired. 
%where $\mathcal{VT}_{k-\frac12}(\mu \rightarrow\lambda)$ is the set of up-down vacillating tableaux from $\mu$ to $\lambda$ of length $2k-1$.
\end{proof} 

%Add an example. 
\begin{example}  \label{ex:map2} 
Let  $n = 5$, $k=3$, and $\mu=(3,1,1)$.   We apply the map $\widetilde{\phi}^3_5$ to the pair $(T, \bsy{i})$, where 
\[
T= \ytableausetup{smalltableaux} \begin{ytableau}
1 &2 &4  \\
3  \\
5
\end{ytableau}
\qquad \bsy{i}=(2,4,3) 
\]
to obtain the $(T^{(j)})$ sequence as follows.
 \[
 \ytableausetup{smalltableaux}
 \begin{ytableau}
1 &  2& 4  \\
3  \\
5
\end{ytableau} \ \to\ \begin{ytableau}
1 &2 &5 \\
3\\
4\\
6
\end{ytableau}\ \to\  \begin{ytableau}
1 & 2 & 5 \\
3\\
4
\end{ytableau}\ \to\  \begin{ytableau}
1  & 2  & 4\\
3  & 6\\
5
\end{ytableau}\ \to\ \begin{ytableau}
1 &2  & 4\\
3 \\
5  
\end{ytableau} \to\  \begin{ytableau}
1  & 2  & 3\\
4  & 5\\
6
\end{ytableau}\ \to\ \begin{ytableau}
1 &2  & 3 \\
4 &  5  
\end{ytableau}
\] 
Thus, the image of $(T, \bsy{i})$ under $\widetilde{\phi}_5^3$ is 
$(P, \Gamma)$, where  
\[ 
P= \ytableausetup{smalltableaux} \begin{ytableau}
1 &2 & 3  \\
4 & 5 
\end{ytableau},  \quad  
 \Gamma=  \ytableausetup{boxsize=7pt}  \left(\ydiagram{3,1,1},\ \ydiagram{3,1,1,1}, \ \ydiagram{3,1,1},\ \ydiagram{3,2,1},\ 
 \ydiagram{3,1,1}, \ 
\ydiagram{3,2,1}, \ 
 \ydiagram{3,2}  \right). \] 
\end{example}

\section{Deletion-insertion in powers of Young's lattice}

In this section we extend the deletion-insertion algorithm of Section 3 
to  $\mathbb{Y}^r$ for $r \geq 2$ and present bijective proofs for 
Identities \eqref{eq:UD-even}-\eqref{eq:UD-odd} and \eqref{eq:DU-even}-\eqref{eq:DU-odd}. 

Elements in $\mathbb{Y}^r$ are $r$-tuples
$\Lambda= (\lambda^{(1)}, \dots, \lambda^{(r)})$ of integer partitions ordered coordinate-wise by inclusion. 
The minimal element $\hat 0$ of $\mathbb{Y}^r$ is $(\emptyset, \cdots, \emptyset)$, and $\Lambda$
is of rank $n$ if and only if $|\lambda^{(1)}| + \cdots + |\lambda^{(r)}|=n$. 
 In addition, 
$\Lambda=(\lambda^{(1)}, \dots, \lambda^{(r)})$ covers $\Theta=(\mu^{(1)}, \mu^{(2)}, \dots, \mu^{(r)})$  if and only if $\lambda^{(j)}=\mu^{(j)}$ for all but one $j \in [r]$,  and for  the exceptional index $j$,  $\lambda^{(j)} \supset \mu^{(j)}$ 
and $|\lambda^{(j)} / \mu^{(j)}|=1$. 

Assume $\Lambda=(\lambda^{(1)}, \dots, \lambda^{(r)})$ is of rank $n$. 
Let $a_i=|\lambda^{(i)}|$. Then 
a saturated chain from $\hat 0$ to $\Lambda$ can be represented by 
a sequence of partial tableaux $(T^{(1)}, \cdots, T^{(r)})$, where 
$T^{(i)}$ is of shape $\lambda^{(i)}$, and the set of all entries in $T^{(1)}, \cdots, T^{(r)}$ is exactly $[n]$.

Let $\mathcal{E}(\Lambda)$ be the set of saturated chains from $\hat 0$ to $\Lambda$. It follows that 
$e(\Lambda) =|\mathcal{E}(\Lambda)|= \binom{n}{a_1, \dots, a_r} \prod_i f^{\lambda^{(i)}}.$ 
Let $\Theta, \Lambda \in \mathbb{Y}^r$. 
Following the notations in Section 3, 
we again use $\mathcal{VT}_i(\Theta \rightarrow \Lambda)$ to represent the set of down-up alternating walks from $\Theta$ to $ \Lambda$  of length $2i$,  and $\mathcal{VT}^\uparrow_i(\Theta \rightarrow \Lambda)$for the set of up-down ones.

For a fixed positive integer $r$, 
let $r[n]=[n]\times [r]$. We interpret  each pair $(i, c) \in r[n]$ as the \emph{integer $i$ with color $c$}.

\begin{proof}[Proof of Identity \eqref{eq:UD-even} in $\mathbb{Y}^r$] \ 
Let $\Theta \in (\mathbb{Y}^r)_n$.  We construct the bijection $\phi_{r,n}^k$ between the following sets: 

\begin{eqnarray*} 
\phi_{r,n}^k: \mathcal{E}(\Theta) \times (r[n])^k  
\longrightarrow
\bigcupdot_{\Lambda \in \mathbb{Y}^r_n  } \mathcal{E}(\Lambda) \times \mathcal{VT}_{k}(\Theta \rightarrow\Lambda). 
\end{eqnarray*}

Let $\bsy{i} = (i_1, \dots, i_k) \in (r[n])^k$, where the value of $i_j$ is in $[n]$ and the color of $i_j$ is  $c(i_j)$.  Start with a sequence of partial tableaux
$\Gamma_0= (T^{(1)}, \dots T^{(r)}) \in \mathcal{E}(\Theta)$, 
we define $\Gamma_0, \Gamma_\frac12, \Gamma_1, \dots, \Gamma_k$ iteratively for $i=0, \frac12, 1, \dots, k$, where  each 
$\Gamma_i$ is an $r$-tuple  of partial tableaux.  
Explicitly, for integers $j=1, 2, \dots, k$,  
\begin{enumerate}
    \item $\Gamma_{j-\frac12}$ is obtained from $\Gamma_{j-1}$ by removing the square containing $n$.  

    The set of entries in tableaux of $\Gamma_{j-\frac12}$ is $[n-1]$.
   
    \item  Let $t=c(i_{j})$. Then $\Gamma_{j}$ is obtained from $\Gamma_{j-\frac12}$ by
      \begin{enumerate}
          \item  adding 1 to  every entry $i$ in the partial tableaux of $\Gamma_{j-\frac12}$   satisfying $i \geq i_{j}$,  
           \item   inserting $i_{j}$ to the $t$-th  tableau using RSK row insertion. 
   \end{enumerate}
    The set of entries in tableaux of $\Gamma_{j}$ is $[n]$.     
\end{enumerate}
The image of $\phi_{r,n}^k$ on $(\Gamma_0, \bsy{i})$  is the pair $(\Gamma_k, \Delta)$, where 
$\Gamma_k$ is in $\mathcal{E}(\Lambda)$ for some $\Lambda \in \mathbb{Y}^r$, and $\Delta$ is the shape-sequence of $(\Gamma_0, \Gamma_\frac12, \Gamma_1, \dots, \Gamma_k)$, which is a down-up alternating walk from $\Theta$ to $\Lambda$ in $\mathbb{Y}^r$. 
The map $\phi_{r,n}^k$ is a bijection because both the deletion procedure and RSK row insertion are invertible; consequently, one can recover the sequence $\Gamma_k, \Gamma_{k-\frac12}, \dots, \Gamma_0$ from $\Gamma_k$ and $\Delta$. 
The value of $i_j$ is recovered via inverse RSK insertion, while its color is determined by identifying the component whose shape  changes  from $\Gamma_{j-\frac12}$ to $\Gamma_{j}$. 
\end{proof} 

\begin{example} \label{ex:Y^r}   
Let $r=n=k=2$ and $\Theta=(1, 1)$. Then $\mathcal{E}(\Theta)$ has two elements,  
$\ytableausetup{smalltableaux} 
(\begin{ytableau}
1 \end{ytableau}, 
\begin{ytableau}
    2 
\end{ytableau}
)$ and 
$\ytableausetup{smalltableaux} 
(\begin{ytableau}
2 \end{ytableau}, 
\begin{ytableau}
    1 
\end{ytableau}
)$, and $(r[n])^k$ has 16 elements. 

To illustrate the bijection $\phi_{r,n}^k$, we show in 
Table \ref{table1} the correspondence between $\mathcal{E}(\Theta) \times (r[n])^k$ to
the pairs $(\Gamma_k, \Delta)$. Here we represent elements in $(r[n])^k$ as $(i,j) \in \{1,\bar 1, 2, \bar 2\}$,  where an integer $i$ means it is of color 1, and 
$\bar i$ means it is of color 2. 

\end{example}

\renewcommand{\arraystretch}{1.2}

\begin{table}[p] 
    \thispagestyle{empty} 
\small 
    
\begin{tabular}{|c|c|l|c|}
\hline
$\mathcal{E}(\Theta)$ & $(i,j) \in \{1,\bar 1, 2, \bar 2\}$ & vacillating tableau $\Delta$ &   $\Gamma_k \in \mathcal{E}(\Lambda)$ \\
\hline

\multirow{16}{*}{$\ytableausetup{smalltableaux} 
(\begin{ytableau}
1 \end{ytableau}, 
\begin{ytableau}
    2 
\end{ytableau}
)$}
&  
% row 1 
$(1,1)$   &  
$\ytableausetup{boxsize=7pt}  
(\ydiagram{1}, \ydiagram{1}) \rightarrow 
(\ydiagram{1}, \emptyset) \rightarrow 
(\ydiagram{1,1}, \emptyset) \rightarrow 
(\ydiagram{1},\emptyset) \rightarrow 
(\ydiagram{1,1}, \emptyset)$   
& $\ytableausetup{smalltableaux} (\begin{ytableau}
1 \\
2\end{ytableau}, 
\emptyset )$    
\\ \cline{2-4}
&  
%row 2 
$(1,\bar 1)$   & 
 $\ytableausetup{boxsize=7pt}  
(\ydiagram{1}, \ydiagram{1}) \rightarrow 
(\ydiagram{1}, \emptyset) \rightarrow 
(\ydiagram{1,1}, \emptyset) \rightarrow 
(\ydiagram{1},\emptyset) \rightarrow 
(\ydiagram{1}, \ydiagram{1})$  
& $\ytableausetup{smalltableaux} ( \begin{ytableau}
  2 
\end{ytableau}, 
 \begin{ytableau}
1
\end{ytableau}
)$  
\\ \cline{2-4}
& 
%row 3
$(1, 2)$   & 
$\ytableausetup{boxsize=7pt} 
(\ydiagram{1}, \ydiagram{1}) \rightarrow 
(\ydiagram{1}, \emptyset) \rightarrow 
(\ydiagram{1,1}, \emptyset) \rightarrow 
(\ydiagram{1},\emptyset) \rightarrow 
(\ydiagram{2}, \emptyset)$  
& $\ytableausetup{smalltableaux} (\begin{ytableau}
  1 & 2
\end{ytableau}, 
\emptyset)$  
\\ \cline{2-4}
& 
%row 4 
$(1,\bar 2)$  & 
$\ytableausetup{boxsize=7pt}  
(\ydiagram{1}, \ydiagram{1}) \rightarrow 
(\ydiagram{1}, \emptyset) \rightarrow 
(\ydiagram{1,1}, \emptyset) \rightarrow 
(\ydiagram{1},\emptyset) \rightarrow 
(\ydiagram{1}, \ydiagram{1})$   
& $\ytableausetup{smalltableaux} (\begin{ytableau}
1 \end{ytableau}, 
\begin{ytableau}
    2
\end{ytableau}
) $ 
\\ \cline{2-4}
&   
%row 5
$(\bar 1, 1)$  &
$\ytableausetup{boxsize=7pt} 
(\ydiagram{1}, \ydiagram{1}) \rightarrow 
(\ydiagram{1}, \emptyset) \rightarrow 
(\ydiagram{1}, \ydiagram{1}) \rightarrow
(\emptyset, \ydiagram{1}) \rightarrow
(\ydiagram{1}, \ydiagram{1}
)$   
& $\ytableausetup{smalltableaux} (\begin{ytableau}
1 \end{ytableau}, 
\begin{ytableau}
    2 
\end{ytableau})$ 
 \\ \cline{2-4}
& 
%row 6
$(\bar 1, \bar 1)$   & 
$\ytableausetup{boxsize=7pt} 
(\ydiagram{1}, \ydiagram{1}) \rightarrow 
(\ydiagram{1}, \emptyset) \rightarrow 
(\ydiagram{1}, \ydiagram{1} )\rightarrow
(\emptyset, \ydiagram{1}) \rightarrow
(\emptyset, \ydiagram{1,1}
)$   
& $\ytableausetup{smalltableaux} (\emptyset, \begin{ytableau}
1 \\
    2 
\end{ytableau})$ 
\\ \cline{2-4}
&  
%row 7
$(\bar 1, 2)$     & 
$\ytableausetup{boxsize=7pt}  
(\ydiagram{1}, \ydiagram{1}) \rightarrow 
(\ydiagram{1}, \emptyset) \rightarrow 
(\ydiagram{1}, \ydiagram{1}) \rightarrow
(\emptyset, \ydiagram{1}) \rightarrow
(\ydiagram{1}, \ydiagram{1}
)$   
& $\ytableausetup{smalltableaux} (\begin{ytableau}
2 \end{ytableau}, 
\begin{ytableau}
    1
\end{ytableau})$ 
\\ \cline{2-4}
& 
%row 8 
$(\bar 1, \bar 2)$   & 
$\ytableausetup{boxsize=7pt} 
(\ydiagram{1}, \ydiagram{1}) \rightarrow 
(\ydiagram{1}, \emptyset) \rightarrow 
(\ydiagram{1}, \ydiagram{1}) \rightarrow
(\emptyset, \ydiagram{1}) \rightarrow
(\emptyset, \ydiagram{2})$   
& $\ytableausetup{smalltableaux} (\emptyset, \begin{ytableau}
1 & 2 \end{ytableau}
)$ 
 \\ \cline{2-4}
& 
%row 9
$(2, 1)$   & 
$\ytableausetup{boxsize=7pt}  (\ydiagram{1}, \ydiagram{1}) \rightarrow ( \ydiagram{1}, \emptyset) \rightarrow 
( \ydiagram{2}, \emptyset) \rightarrow (\ydiagram{1},\emptyset) \rightarrow 
(\ydiagram{1,1}, \emptyset)$   
& $\ytableausetup{smalltableaux} (\begin{ytableau}
1\\ 2 \end{ytableau}, 
\emptyset)$ 
\\ \cline{2-4}
& 
%row 10
$(2, \bar 1)$  & 
$\ytableausetup{boxsize=7pt}  (\ydiagram{1}, \ydiagram{1}) \rightarrow ( \ydiagram{1}, \emptyset) \rightarrow 
( \ydiagram{2}, \emptyset) \rightarrow (\ydiagram{1},\emptyset) \rightarrow 
(\ydiagram{1}, \ydiagram{1})$   
& $\ytableausetup{smalltableaux} (\begin{ytableau}
2 \end{ytableau}, 
\begin{ytableau}
1 \end{ytableau}
)$ 
 \\ \cline{2-4}
&
%row 11
$(2, 2)$   & 
$\ytableausetup{boxsize=7pt}  (\ydiagram{1}, \ydiagram{1}) \rightarrow ( \ydiagram{1}, \emptyset) \rightarrow 
( \ydiagram{2}, \emptyset) \rightarrow (\ydiagram{1},\emptyset) \rightarrow 
(\ydiagram{2}, \emptyset)$   
& $\ytableausetup{smalltableaux} (\begin{ytableau}
1 & 2 \end{ytableau}, 
\emptyset)$ 
 \\ \cline{2-4}
& 
%row 12
$(2, \bar 2)$  & 
$\ytableausetup{boxsize=7pt}  (\ydiagram{1}, \ydiagram{1}) \rightarrow ( \ydiagram{1}, \emptyset) \rightarrow 
( \ydiagram{2}, \emptyset) \rightarrow (\ydiagram{1},\emptyset) \rightarrow 
(\ydiagram{1}, \ydiagram{1})$   
& $\ytableausetup{smalltableaux} (\begin{ytableau}
1 \end{ytableau}, 
\begin{ytableau}
2 \end{ytableau}
)$ 
 \\ \cline{2-4}
& 
%row 13
$(\bar 2, 1)$  & 
$\ytableausetup{boxsize=7pt}  (\ydiagram{1}, \ydiagram{1}) \rightarrow 
( \ydiagram{1}, \emptyset) \rightarrow 
( \ydiagram{1}, \ydiagram{1}) \rightarrow 
(\ydiagram{1}, \emptyset) \rightarrow 
(\ydiagram{1,1}, \emptyset)$   
& $\ytableausetup{smalltableaux} (\begin{ytableau}
1 \\ 2 \end{ytableau}, 
\emptyset
)$ 
 \\ \cline{2-4}
& 
%rw 14
$(\bar 2, \bar 1)$  & 
$\ytableausetup{boxsize=7pt}  (\ydiagram{1}, \ydiagram{1}) \rightarrow 
( \ydiagram{1}, \emptyset) \rightarrow 
( \ydiagram{1}, \ydiagram{1}) \rightarrow 
(\ydiagram{1},\emptyset) \rightarrow 
(\ydiagram{1}, \ydiagram{1})$   
& $\ytableausetup{smalltableaux} (\begin{ytableau}
    2 \end{ytableau}, 
,\begin{ytableau}
1 \end{ytableau}
)$ 
\\ \cline{2-4}
& 
%row 15
 $(\bar 2, 2)$  & 
 $\ytableausetup{boxsize=7pt}  (\ydiagram{1}, \ydiagram{1}) \rightarrow ( \ydiagram{1}, \emptyset) \rightarrow 
( \ydiagram{1}, \ydiagram{1}) \rightarrow (\ydiagram{1},\emptyset) \rightarrow 
(\ydiagram{2}, \emptyset)$   
& $\ytableausetup{smalltableaux} (\begin{ytableau}
1  & 2 \end{ytableau}, \emptyset 
)$ 
 \\ \cline{2-4}
& 
%row 16 
$(\bar 2, \bar 2)$  &
$\ytableausetup{boxsize=7pt}  (\ydiagram{1}, \ydiagram{1}) \rightarrow ( \ydiagram{1}, \emptyset) \rightarrow 
( \ydiagram{1}, \ydiagram{1}) \rightarrow (\ydiagram{1}, \emptyset) \rightarrow 
(\ydiagram{1}, \ydiagram{1})$   
& $\ytableausetup{smalltableaux} (\begin{ytableau}
1 \end{ytableau}, 
\begin{ytableau}
2 \end{ytableau}
)$ 
 \\ \hline

\multirow{16}{*}{$\ytableausetup{smalltableaux} 
(\begin{ytableau}
2 \end{ytableau}, 
\begin{ytableau}
    1
\end{ytableau}
)$}
% Row1
&  $(1,1)$   & 
$\ytableausetup{boxsize=7pt} 
(\ydiagram{1}, \ydiagram{1}) \rightarrow
( \emptyset, \ydiagram{1} ) \rightarrow 
( \ydiagram{1},\ydiagram{1}) \rightarrow
(\ydiagram{1}, \emptyset) \rightarrow 
(\ydiagram{1,1}, \emptyset)$   
& $\ytableausetup{smalltableaux} (\begin{ytableau}
1 \\2 \end{ytableau}, \emptyset 
)$ 
\\ \cline{2-4}
%Row 2
&   $(1,\bar 1)$   & 
$\ytableausetup{boxsize=7pt}  (\ydiagram{1}, \ydiagram{1}) \rightarrow ( \emptyset, \ydiagram{1}) \rightarrow 
( \ydiagram{1}, \ydiagram{1}) \rightarrow (\ydiagram{1},\emptyset) \rightarrow 
(\ydiagram{1}, \ydiagram{1})$   
& $\ytableausetup{smalltableaux} (\begin{ytableau}
2 \end{ytableau}, 
\begin{ytableau}
1 \end{ytableau}
)$ 
\\ \cline{2-4}
%Row 3 
&  $(1, 2)$   & 
$\ytableausetup{boxsize=7pt}  (\ydiagram{1}, \ydiagram{1}) \rightarrow ( \emptyset, \ydiagram{1}) \rightarrow 
( \ydiagram{1}, \ydiagram{1}) \rightarrow
(\ydiagram{1}, \emptyset) \rightarrow 
(\ydiagram{2}, \emptyset)$   
& $\ytableausetup{smalltableaux} (\begin{ytableau}
1 & 2 \end{ytableau}, \emptyset 
)$ 
\\ \cline{2-4}
%Row 4
&  $(1,\bar 2)$  & 
$\ytableausetup{boxsize=7pt}  (\ydiagram{1}, \ydiagram{1}) \rightarrow ( \emptyset, \ydiagram{1}) \rightarrow 
( \ydiagram{1}, \ydiagram{1}) \rightarrow (\ydiagram{1},\emptyset) \rightarrow 
(\ydiagram{1}, \ydiagram{1})$   
& $\ytableausetup{smalltableaux} (\begin{ytableau}
1 \end{ytableau}, 
\begin{ytableau}
2 \end{ytableau}
)$ 
\\ \cline{2-4}
%Row5 
&   $(\bar 1, 1)$  & 
$\ytableausetup{boxsize=7pt}  (\ydiagram{1}, \ydiagram{1}) \rightarrow (  \emptyset, \ydiagram{1}) \rightarrow 
( \emptyset, \ydiagram{1,1}) \rightarrow 
(\emptyset, \ydiagram{1}) \rightarrow 
(\ydiagram{1}, \ydiagram{1})$   
& $\ytableausetup{smalltableaux} (\begin{ytableau}
1  \end{ytableau}, 
\begin{ytableau}
    2
\end{ytableau}
)$ 
\\ \cline{2-4}
%row 6 
&  $(\bar 1, \bar 1)$ &
$\ytableausetup{boxsize=7pt}  (\ydiagram{1}, \ydiagram{1}) \rightarrow 
( \emptyset, \ydiagram{1}) \rightarrow 
( \emptyset, \ydiagram{1,1}) \rightarrow 
(\emptyset, \ydiagram{1}) \rightarrow 
( \emptyset, \ydiagram{1,1}) 
)$   
& $\ytableausetup{smalltableaux} (
\emptyset, 
\begin{ytableau}
1\\ 2 \end{ytableau} 
)$ 
\\ \cline{2-4}
%Row 7

&  $(\bar 1, 2)$     & 
$\ytableausetup{boxsize=7pt}  (\ydiagram{1}, \ydiagram{1}) \rightarrow ( \emptyset, \ydiagram{1}) \rightarrow 
( \emptyset, \ydiagram{1,1}) \rightarrow 
(\emptyset, \ydiagram{1}) \rightarrow 
(\ydiagram{1}, \ydiagram{1})$   
& $\ytableausetup{smalltableaux} (\begin{ytableau}
2 \end{ytableau}, 
\begin{ytableau}
1 \end{ytableau}
)$ 
\\ \cline{2-4}
%Row 8 
&  $(\bar 1, \bar 2)$   & 
$\ytableausetup{boxsize=7pt}  (\ydiagram{1}, \ydiagram{1}) \rightarrow ( \emptyset, \ydiagram{1}) \rightarrow 
( \emptyset, \ydiagram{1,1}) \rightarrow (\emptyset, \ydiagram{1}) \rightarrow 
(\emptyset, \ydiagram{2})$   
& $\ytableausetup{smalltableaux} (\emptyset, \begin{ytableau}
1 & 2 \end{ytableau}
)$ 
 \\ \cline{2-4}

%Row 9
& $(2, 1)$   & 
$\ytableausetup{boxsize=7pt}  (\ydiagram{1}, \ydiagram{1}) \rightarrow 
(\emptyset, \ydiagram{1}) \rightarrow 
(\ydiagram{1}, \ydiagram{1}) \rightarrow 
(\emptyset, \ydiagram{1}) \rightarrow 
(\ydiagram{1}, \ydiagram{1})$   
& $\ytableausetup{smalltableaux} (\begin{ytableau}
1 
\end{ytableau}, 
\begin{ytableau}
    2
\end{ytableau}
)$    
 \\ \cline{2-4}

%Row 10 
& $(2, \bar 1)$  & 
$\ytableausetup{boxsize=7pt}  (\ydiagram{1}, \ydiagram{1}) \rightarrow (\emptyset, \ydiagram{1}) \rightarrow 
(\ydiagram{1}, \ydiagram{1}) \rightarrow 
(\emptyset, \ydiagram{1}) \rightarrow 
(\emptyset, \ydiagram{1,1})$   
& $\ytableausetup{smalltableaux} (
\emptyset, \begin{ytableau}
1 \\ 2
\end{ytableau}
)$    
\\ \cline{2-4}

%row 11
& $(2, 2)$   & 
$\ytableausetup{boxsize=7pt}  
(\ydiagram{1}, \ydiagram{1}) \rightarrow 
(\emptyset, \ydiagram{1}) \rightarrow 
(\ydiagram{1}, \ydiagram{1}) \rightarrow 
(\emptyset, \ydiagram{1}) \rightarrow 
(\ydiagram{1}, \ydiagram{1})$   
& $\ytableausetup{smalltableaux} (\begin{ytableau}
2
\end{ytableau}, 
\begin{ytableau}
1
\end{ytableau}
)$    
\\ \cline{2-4}

%row 12
& $(2, \bar 2)$  & 
$\ytableausetup{boxsize=7pt}  
(\ydiagram{1}, \ydiagram{1}) \rightarrow 
(\emptyset, \ydiagram{1}) \rightarrow 
(\ydiagram{1}, \ydiagram{1}) \rightarrow 
(\emptyset, \ydiagram{1}) \rightarrow 
(\emptyset, \ydiagram{2})$   
& $\ytableausetup{smalltableaux} 
(\emptyset, 
\begin{ytableau}
1 & 2 
\end{ytableau}
)$    
\\ \cline{2-4}

%row 13
& $(\bar 2, 1)$  & 
$\ytableausetup{boxsize=7pt}  
(\ydiagram{1}, \ydiagram{1}) \rightarrow 
(\emptyset, \ydiagram{1}) \rightarrow 
(\emptyset, \ydiagram{2}) \rightarrow 
(\emptyset, \ydiagram{1}) \rightarrow 
(\ydiagram{1}, \ydiagram{1})$   
& $\ytableausetup{smalltableaux} (\begin{ytableau}
1
\end{ytableau}, 
\begin{ytableau}
2
\end{ytableau}
)$    
\\ \cline{2-4}

% row 14 
& $(\bar 2, \bar 1)$  & 
$\ytableausetup{boxsize=7pt}  
(\ydiagram{1}, \ydiagram{1}) \rightarrow 
(\emptyset, \ydiagram{1}) \rightarrow 
(\emptyset, \ydiagram{2}) \rightarrow 
(\emptyset, \ydiagram{1}) \rightarrow 
(\emptyset, \ydiagram{1,1})$   
& $\ytableausetup{smalltableaux} (
\emptyset,
\begin{ytableau}
1 \\ 2
\end{ytableau} 
)$    
\\ \cline{2-4}

%row 15
& $(\bar 2, 2)$  & 
$\ytableausetup{boxsize=7pt}  
(\ydiagram{1}, \ydiagram{1}) \rightarrow 
(\emptyset, \ydiagram{1}) \rightarrow 
(\emptyset, \ydiagram{2}) \rightarrow 
(\emptyset, \ydiagram{1}) \rightarrow 
(\ydiagram{1}, \ydiagram{1})$   
& $\ytableausetup{smalltableaux} (\begin{ytableau}
2
\end{ytableau}, 
\begin{ytableau}
1
\end{ytableau}
)$    
\\ \cline{2-4}

%row16 
& $(\bar 2, \bar 2)$  & 
$\ytableausetup{boxsize=7pt}  
(\ydiagram{1}, \ydiagram{1}) \rightarrow 
(\emptyset, \ydiagram{1}) \rightarrow 
(\emptyset, \ydiagram{2}) \rightarrow 
(\emptyset, \ydiagram{1}) \rightarrow 
(\emptyset, \ydiagram{2})$   
& $\ytableausetup{smalltableaux} (
\emptyset, 
\begin{ytableau}
1 & 2 
\end{ytableau} 
)$    

\\ \hline

\end{tabular}
\caption{Correspondence of $\phi_{r,n}^k$ for $r=n=k=2$. } \label{table1}
\end{table}

\normalsize 

\begin{proof}[Proof of Identity \eqref{eq:UD-odd} in $\mathbb{Y}^r$] 
Observe that the following two sets are identical, as both are sets of $D^n (UD)^k$-walks from $\Theta \in (\mathbb{Y}^r)_n$ to $\hat 0$: 
\begin{eqnarray} \label{bijection:r2}
\bigcupdot_{\Lambda \in \mathbb{Y}^r_{n}  } \mathcal{E}(\Lambda) \times \mathcal{VT}_{k}(\Theta \rightarrow\Lambda)
 = 
 \bigcupdot_{\Lambda \in \mathbb{Y}^r_{n-1}  } \mathcal{E}(\Lambda) \times \mathcal{VT}_{k+\frac12}(\Theta \rightarrow\Lambda). 
\end{eqnarray}
Hence $\phi_{r,n}^k$ also induces a bijecive proof for \eqref{eq:UD-odd}. 
\end{proof}

\begin{proof}[Proof of Identity \eqref{eq:DU-even} in $\mathbb{Y}^r$] 
Given $\Theta \in (\mathbb{Y}^r)_n$ and  $\bsy{i} \in (r[n+1])^k$,   we construct a bijection $\widetilde{\phi}_{r,n}^k$ between the following sets: 

\begin{eqnarray} \label{bijection:r3}
 \mathcal{E}(\Theta) \times (r[n+1])^k  
\longrightarrow
\bigcupdot_{\Lambda \in \mathbb{Y}^r_{n}  }   \mathcal{E}(\Lambda) \times \mathcal{VT}^\uparrow_{k}(\Theta \rightarrow\Lambda). 
\end{eqnarray}

Assume $\bsy{i} =(i_1, \dots, i_{k}) \in (r[n+1])^k$, where the value of $i_j$ is in $[n+1]$ and the color of $i_j$ is  $c(i_j)$, 
and $\Gamma = (T^{(1)}, \dots, T^{(r)}) \in \mathcal{E}(\Theta)$, we define $\Gamma_0, \Gamma_\frac12, \Gamma_1, \dots, \Gamma_k$ iteratively, where each $\Gamma_i$ is an $r$-tuple of partial tableaux. First, let $\Gamma_0=\Gamma$.

For $j=1, 2, \dotsm k$, 
\begin{enumerate}
    \item Let $t=c(i_j)$.  Then $\Gamma_{j-\frac12}$ is obtained from $\Gamma_{j-1}$ by 
    \begin{enumerate}
        \item adding 1 to every entry $i$ in the partial tableaux of $\Gamma_{j-1}$ satisfying $i \geq i_j$,  and 
        \item inserting $i_j$ to the $t$-th tableau using RSK row insertion.  
    \end{enumerate}
    
    The set of entries in tableaux of $\Gamma_{j-\frac12}$ is $[n+1]$.  

    \item  $\Gamma_j$ is obtained from $\Gamma_{j-\frac12}$  by removing the square containing $n+1$.   

    The set of entries in tableaux of $\Gamma_j$ is $[n]$.    
\end{enumerate}

 %$\Gamma_k \in \mathcal{E}(\Lambda)$ for some $\Lambda \in \mathbb{Y}^r_n$. 

The image of $(\Gamma, \bsy{i})$ under $\widetilde{\phi}_{r,n}^k$ is the pair $(\Gamma_k, \Delta)$, where $\Delta$ is the shape-sequence of $(\Gamma_0, \Gamma_\frac12, \Gamma_1, \dots, \Gamma_k)$.  It is a bijection because every step is invertible. We skip the details here. 
\end{proof}

\begin{proof}[Proof of Identity \eqref{eq:DU-odd} in $\mathbb{Y}^r$] 
Use the same construction as of  $\widetilde{\phi}_{r,n}^k$ in the preceding proof, 
except that one stops at $\Gamma_{k-\frac12}$. The final image is the pair $(\Gamma_{k-\frac12}, \Delta')$, where $\Delta'$ is the shape-sequence of $(\Gamma_0, \Gamma_\frac12, \Gamma_1, \dots, \Gamma_{k-\frac12})$. 
 \end{proof}

\section{Final Remarks} 
\noindent 
%\textsc{Final Remark.}\ 

We finish this paper with some  remarks. 

\subsection{The deletion procedure }\   

\iffalse 
one can also use Fomin's general framework of growth diagrams \cite{Fomin95-2}, which is a generalization of the Robinson-Schensted correspondence, to construct bijective proofs for Identities 
\eqref{eq:Young1} - \eqref{eq:Young3} in $\mathbb{Y}$. 
 The detailed description was given  by Kratthenthaler  \cite{Kra23}  for  Identity \eqref{Eq:Id2}  and other identities involving numbers of \vtx \ between certain pairs of $\mu$ and $\lambda$.  The bijection using growth diagram is different from Halverson and Lewandowski's one based on the deletion-insertion algorithm.  
\fi

The main ingredients of the bijections in this paper are the deletion procedure and 
(lifted) RSK row insertion.   RSK insertion has the useful property that if a tableau $S$ is obtained from a partial tableau $T$ by inserting $i$, then $i$ and $T$ can be recovered uniquely from $S$  together with the shape of $T$. Therefore, for the deletion procedure, it suffices to have an operation with the following property: whenever a tableau $T$ is obtained from a tableau $S$ by deleting $i$, one can recover $S$ uniquely from $T$, $i$, and the shape of $S$.

The deletion procedure used in this paper is the simplest possible choice: delete the maximal entry of $S$.

 The  bijection $\phi_n^k$  constructed in the proof of Identity \eqref{eq:Young1} can also be described in terms of  growth diagrams.  It is essentially equivalent to the bijection given by Krattenthaler in \cite[Section 4]{Kra23}. The only difference is that  the integer sequence $\bsy{i}$  used in \cite{Kra23} appears in  reversed order 
 relative to the sequence associated with $\phi_n^k$. 
 For example, let $\mu=(3,2,1)$ and $\lambda=(2,2,1,1)$.   
 Following \cite[Figure 5]{Kra23},  
 take the followng SYT $P$ of shape $\lambda$ and  vacillating tableau  $\Gamma$ from $\mu$ to $\lambda$\footnote{There is a typo in \cite{Kra23}. The fifth partition in $\Gamma$ should be $(2,2,2)$, instead of $(2,2,1,1)$}: 
 \[ 
P= \ytableausetup{smalltableaux} \begin{ytableau}
1 &2  \\
3 & 5  \\
4 \\
6
\end{ytableau},  \quad  
 \Gamma=  \ytableausetup{boxsize=7pt}  \left(\ydiagram{3,2,1},\ \ydiagram{2,2,1,}, \ \ydiagram{2,2,2},\ \ydiagram{2,2,1}, \ \ydiagram{2,2,2}, \ \ydiagram{2,2,1}, \ \ydiagram{2,2,1,1}
 \right). \] 
 
 Under our map $\phi_n^k$ the pair $(P, \Gamma) \in \mathcal{SYT}(\lambda) \times \mathcal{VT}_3(\mu \rightarrow \lambda)$ corresponds to the pair 
 $(T, (3,2,2))$, while the bijection in \cite{Kra23} maps $(P, \Gamma)$ to $(T, (2,2,3))$, 
 where 
 \[
 T= \ytableausetup{smalltableaux} \begin{ytableau}
1 & 4 & 6   \\
2 & 5  \\
3  
\end{ytableau}.  
 \]
Our bijection $\phi_n^k$ provides a more concise description of  the growth-diagram bijection in \cite{Kra23}. 
Krattenthaler considered permutation fillings of a Ferrers diagram with $n+k$ rows whose lengths, from top to bottom, are   $n, n+1, \dots, n+k, \dots, n+k$, where $n+k$ occurs $n$ times. In fact, it suffices to consider only the first $k$ rows, whose lengths are $n, n+1, \dots, n+k-1$.  See Figure~\ref{fig:growth} for an illustration. In the figure we keep the same notations as in \cite{Kra23}  and align the squares for the growth diagram  at the lower-left corner. 

Label all the vertices along the left boundary  with empty partitions. Then   
the bijection $\phi_n^k$ asserts that the following two sets of data  determine the same growth diagram:
\begin{enumerate}
    \item The sequence of partitions along the top  boundary, corresponding to a tableau $P \in \mathcal{SYT}(\lambda)$, together with the sequence of partitions along the right boundary, corresponding to  a vacillating tableau $\Gamma$ from $\lambda$ to $\mu$; 
    \item A restricted filling of the Ferrers diagram, corresponding to  a sequence
    $\bsy{i} \in [n]^k$,  and the sequence of distinct partitions along the bottom boundary, corresponding to a tableau $T \in \mathcal{SYT}(\mu)$. 
    The filling  is restricted in the sense that each row contains exactly one cross and no two crosses occupy  the same column. Moreover, 
    two consecutive partitions along the bottom boundary differ by one square if and only if the column immediately above them contains no cross. 
\end{enumerate}

\begin{figure}[htp]
    \centering
    
\begin{tikzpicture}[scale=.9] 
\foreach \y/\n in {2/6,1/7,0/8}
  \foreach \x in {1,...,\n}
    \draw (\x-1,\y) rectangle ++(1,1);

   \draw (8,0) -- (9,0); 

   % Empty-set symbols
\node at (-0.2,-.3) {$\emptyset$};
\node at (-0.2,1) {$\emptyset$};
\node at (-0.2,2) {$\emptyset$};
\node at (-0.2,3.2) {$\emptyset$};

% Put \lambda and \mu 
\node at (10, -.3) {$321 = \mu$ }; 
\node at (6.7, 3.2) {$2211 = \lambda$ }; 

% put the cross 
\node at (1.5, 2.5) {X}; 
\node at (2.5, 1.5) {X};
\node at (4.5, 0.5) {X}; 

% put P, T, and \Gamma 
\node at (3.5, 3.7) {-- P -- }; 
\node at (4.5, -1) {-- T --} ; 
\node at (10, 2) { $\Gamma$  }; 
\node at (9.7, 2.4) {$\setminus$ }; 
\node at (10.2, 1.6) {$\setminus$};

% SYT P 
\node at (1,3.2) {1}; 
\node at (2,3.2) {2}; 
\node at (3, 3.2) {21}; 
\node at (4, 3.2) {211}; 
\node at (5, 3.2) {221}; 

% VT from lambda to mu
\node at (6.2, 2.2) {221}; 
\node at (7.4, 2.2) {222}; 
\node at (7.4, 1.2) {221}; 
\node at (8.4, 1.2) {222}; 
\node at (8.2, -.3) {221};

% SYT T to \mu 
\node at (1, -.3) {1}; 
\node at (2, -.3) {1}; 
\node at (3, -.3) {1};
\node at (4, -.3) {11}; 
\node at (5, -.3) {11}; 
\node at (6, -.3) {111}; 
\node at (7, -.3) {211}; 
%\node at (8, -.3) {221};   did above in VT 

\end{tikzpicture}

\caption{The growth diagram for bijection $\phi_n^k$ with $n=6$ and $k=3$.}
    \label{fig:growth}
\end{figure}

There are other choices for the deletion procedure.  Halverson and Lewandowski used jeu de taquin to remove the entry $i_j$ in the $j$-th step of the algorithm. Another natural alternative is to delete the minimal element, or the  $t$-th element for a prefixed $t$ in $[n]$, in a partial tableau again using jeu de taquin. It would be interesting to understand the combinatorial significance of
these and other deletion procedures, and to investigate how different choices affect the resulting bijections.

\subsection{Rim Hook Lattice}\ 
 Another model of $r$-differential posets is the $r$-rim hook lattice. 
A rim hook is a set of squares  forming a contiguous strip with at most one square on each diagonal. An $r$-rim hook is a rim hook containing exactly $r$ squares. 

For two integer partitions $\mu$ and $\lambda$, if $\mu \subset \lambda$ and $\lambda \setminus \mu$ is an $r$-rim hook, then  we say that $\mu \lessdot \lambda$. Equivalently, $\mu \lessdot \lambda$ if $\mu$ can be obtained from $\lambda$ by removing an $r$-rim hook.  We define $\mu \preceq \lambda$ if $\mu$ can be obtained from $\lambda$ by successively removing $r$-rim hooks. 

Let $RH_r$ be the set of  integer partitions $\lambda$ such that
\[
\emptyset \preceq \lambda.
\]
Fomin and Stanton \cite{FS98} proved that 
$RH_r$ is a lattice and is isomorphic to $\mathbb{Y}^r$. 
Hence the bijections in Section 4 naturally extend to $RH_r$. 

Nevertheless, we can consider alternating walks directly on the Hasse diagram of $RH_r$ and construct deletion-insertion algorithms  using a rim hook version of the jeu de taquin together with  the rim hook insertion algorithm; both are developed by Stanton and White \cite{White83,StWhite85}. 

To get a set with cardinality $rn$, instead of integers in $[n]$ with $r$ colors, 
one uses the set of \emph{hook tableaux of size $r$ and content $i \in [n]$}.  Here, 
a hook tableau of size $r$ is a tableau of shape $(i, 1^{r-i})$ whose  entries are all equal, and this common value is called the content.
The rim hook insertion algorithm developed by Stanton and White allows one to insert a hook tableau of size $r$ with content $i$ into an $r$-rim hook tableau $P$ 
with no entry $i$. This algorithm can be inverted, in a similar way that one inverts RSK
row insertion.

Combinatorially,  walks in $\mathbb{Y}^r$ and $RH_r$ lead to two different representations of $r$-colored discrete structures, 
therefore provide distinct perspectives on their combinatorial properties. For example,  Chen and Guo \cite{ChenGuo} studied colored matchings via oscillating rim hook tableaux and oscillating tableaux in $\mathbb{Y}^2$, that is, walks from $\hat 0$ to $\hat 0$  consisting of up and down steps that are not necessarily alternating.

\subsection{Representation Interpretations} \ 
 Identity \eqref{eq: Identity1} arises from the representation theory of partition algebras $\mathbb{C}A_k(n)$, reflecting the Schur-Weyl duality between the symmetric group $S_n$ and $\mathbb{C}A_k(n)$ on the $k$-fold tensor power $M_n^{\otimes k}$ of the permutation module $M_n$ of $S_n$; see \cite[Sec.2.1]{HL05}.  This duality has been  further developed in \cite{BH19, BHH17}. Identities \eqref{eq:Young1} and \eqref{eq:Young3} exhibit a strong similarity to Identity \eqref{eq: Identity1}, and rim hook tableaux are closely related to the wreath product $S_m[Z_k]$, in particular,  the hyperoctahedral group (the signed permutation group). 
 Hence 
it would be interesting to provide representation-theoretic 
interpretations for the new identities proved in the present paper.

\iffalse 
Can one give Identities \eqref{eq:Young1} --\eqref{eq:Young3} 
a representation interpretation?  

Paper \cite{HL05} would be a starting point. Also check the references it cites, as well as papers citing it for more on representation of partition algebra. 
\fi

\bibliographystyle{abbrv}
\bibliography{Reference}

\end{document}